\newtheorem{theorem}{Theorem}[section]
\newtheorem{definition}{Definition}[section]
\newtheorem{remark}{Remark}[section]
\numberwithin{equation}{section}
	\author{Raffaela Capitanelli}
	\address{Department of Basic and Applied Sciences for Engineering
	\newline Sapienza University of Rome
	\newline via A. Scarpa 10, Rome, Italy}
	\email{raffaela.capitanelli@uniroma1.it}
	\author{Mirko D'Ovidio}
	\address{Department of Basic and Applied Sciences for Engineering
	\newline Sapienza University of Rome
	\newline via A. Scarpa 10, Rome, Italy}
	\email[corresponding author]{mirko.dovidio@uniroma1.it}
\title{Fractional equations via convergence of forms}
\date{\today}
\begin{document}

\maketitle

\begin{abstract}
We relate the convergence of time-changed processes driven by fractional equations to the convergence of corresponding Dirichlet forms. The fractional equations we dealt with are obtained by considering a general fractional operator in time. 
\end{abstract}

\small
\noindent {\sc Keywords.} Fractional time derivative, inverse subordinator, Dirichlet form, Mosco convergence

\noindent {\sc MSC.} 26A33, 60B10, 60H30, 31C25

\normalsize

\section{Introduction}

Time fractional derivative is usually considered in the Riemann-Liouville or in the Caputo sense. The Caputo derivative, for $\beta \in (0,1)$, is defined as
\begin{align*}
D_t^\beta u(t) : = \frac{1}{\Gamma(1-\beta)} \int_0^t \frac{u^\prime(s)}{(t-s)^\beta}ds = {^*D}_t^\beta \big(u(t) - u(0)\big)
\end{align*}
where $\Gamma$ is the Gamma function and 
\begin{align*}
{^*D}_t^\beta u(t) := D_t \left[ \frac{1}{\Gamma(1-\beta)} \int_0^t \frac{u(s)}{(t-s)^\beta} ds \right],
\end{align*}
with $D_t=d/dt$, is the Riemann-Liouville derivative ($D^1_t = {^*D}^1_t = D_t$), see for example \cite{KST}. Researchers started considering time fractional equations in order to model anomalous diffusions, that is, diffusions with a non-linear relationship to time in which the mean squared displacement is proportional to a power $\beta \neq 1$ of time.  For $\beta \in (0,1)$, the anomalous diffusion exhibits a subdiffusive behavior (for example, due to particle sticking and trapping phenomena) whereas, for $\beta >1$ we have superdiffusive behaviour (for instance, jumps). Such anomalous feature can be also found in transport phenomena in complex systems, e.g. in random fractal structures (see \cite{GionaRoman}, \cite{MS-book} and \cite{Barlow}). 

We study fractional equations, that is equations involving fractional operators in time and in space.  A type of space fractional equation we consider in this work is the one involving the fractional Laplacian, for instance. Equations involving time-space fractional derivative have been considered in order to model ground water flows and transport (\cite{SMZZC}, fractional in space and example of superdiffusion), the motion of individual fluorescently labeled mRNA molecules inside live E. coli cells (\cite{GoldCox}, fractional in time and example of subdiffusion). Further applications can be found in the theory of viscoelasticity (\cite{AppFC-Soc}), in modeling the cardiac tissue electrode interface (\cite{AppFC-Mag}), in modeling the anisotropies of the Cosmic Microwave Background radiation where the involved processes move on the sphere (\cite{DovSPA}, \cite{DovJSP}) or, in general, on compact manifolds (\cite{DovNan16}). 

Time fractional equations has been treated by a number of authors. In  \cite{SWyss89}, \cite{Wyss86} and later in \cite{Hil00} the solutions to the heat-type fractional diffusion equation have been studied and presented in terms of Fox's functions. The same equations have been investigated in \cite{OB03,OB09} where the solutions have been represented by means of stable densities, focusing on the explicit representations only in some cases. Different boundary value problems have been studied for example  in \cite{BM01, BO09elastic, KLW, MezK00} following different approaches. In the papers \cite{MLP2001, MPS05, MPG07} the authors presented the solutions to space-time fractional equations by means of Wright functions or Mellin-Barnes integral representations, that is Fox's functions (see also \cite{MMP10} for a review on the Mainardi-Wright function and \cite{GIL}, \cite{Mainardi}, \cite{ScalasGM}). In \cite{Nig86} the author gives a physical interpretation of $D^\beta_t u= Au$ when $A$ is the generator of a Markov process while the works \cite{Koc89, Koc90} for the first time introduced a mathematical approach. The works \cite{SaicZasl},\cite{Zasl94} proposed the fractional kinetic equation for Hamiltonian chaos. In \cite{BM01}, the time fractional problem is  studied when $A$ is an infinitely divisible generator on a finite dimensional space. Time fractional equations have been also related to (space) higher order equations (\cite{All}, \cite{AllZheng}, \cite{BonDovMaz}, \cite{DovHO}, \cite{KeyLizHO}, \cite{NaneHO}) where the solutions to higher order equations (say $n\geq 2$) are obtained by considering the time fractional equations with Caputo derivative of order $\beta =1/n$.  



In general, stochastic solutions to fractional equations can be realized through time-changes. Indeed, for a base process $X_t$ with generator $A$ we have that $X_{L_t}$ is governed by $\partial^\beta_t u= A u $ where the process $L_t$, $t>0$ is an inverse (or hitting time process) to the $\beta$-stable subordinator $H_t$, $t\geq 0$. The time fractional derivative comes from the fact that $X_{L_t}$ can be viewed as a scaling limit of continuous time random walk, where the iid jumps are separated by iid power law waiting times (see \cite{MSheff04}, \cite{MezKlaf}, \cite{RomAle94}). Results on the subordination principle for fractional evolution equations can be found in \cite{Baz2000}. Fractional equations have been therefore associated with stochastic processes in the sense that solutions to a time fractional equations can be written in terms of time-changes of base processes. The time-change we consider in case of Caputo derivative is an inverse to a stable subordinator. The Caputo derivative is related to an inverse to a stable subordinator as well as the general operators we deal with can be associated with an inverse to a general subordinator, that is, the process $X_{L_t}$ is obtained by considering a general inverse $L$. 
Such (general) time fractional operator has been recently treated in \cite{AlCV}, \cite{chen}, \cite{CKKW},  \cite{MacSchi15}, \cite{MShef08}, \cite{toaldo}. We notice that there is a consistent literature on this topic, therefore the previous references  are intended to be illustrative, and not exhaustive.

In the present paper we consider a Feller process $X$ on $(E,\mathcal{B}(E)$ with generator $(A,D(A))$ associated with regular Dirichlet form $(\mathcal{E}, \mathcal{F})$ on $L^2(E, m)$ and a subordinator  $H$ with symbol $\Phi$. We focus on the time-changed process $X^\Phi = X \circ L$ where $L$ is the inverse to the subordinator $H$.
We define the time fractional operator $\mathfrak{D}^\Phi_t$ (see Section 5) such that
\begin{align*}
\int_0^\infty e^{-\lambda t} \mathfrak{D}^\Phi_t u(t)\, dt = \Phi(\lambda) \widetilde{u}(\lambda) - \frac{\Phi(\lambda)}{\lambda} u(0), \quad \lambda > w 
\end{align*}
and we consider the fractional equation
\begin{align}
\mathfrak{D}^\Phi_t u = Au, \qquad u_0=f \in D(A).
\end{align}
The probabilistic representation of the unique solution is written in terms of the time-changed process $X^\Phi,$ that is, \begin{align}
u(t,x) = \mathbf{E}_x \left[ f(X^\Phi_t) \right] = P^\Phi_t f(x)
\end{align}
(see Theorem \ref{time-frac-THM}).
For this solution, we define the $\lambda$-potential
\begin{align}
& R^\Phi_\lambda f(x) :=  \mathbf{E}_x \left[ \int_0^\infty e^{-\lambda t} f(X^\Phi_t) dt \right], \quad \lambda> 0.
\end{align}
We consider a sequence of  processes $X^n$ on $(E^n, \mathcal{B}(E^n))$ with generators $(A^n, D(A^n))$ associated with the Dirichlet forms $(\mathcal{E}^n, \mathcal{F}^n)$, $\mathcal{F}^n = D(\mathcal{E}^n)$ on $L^2(E^n, m)$. We construct  the  sequence of  time-changed processes $X^{\Phi, n}$  related to $X^n$  and the  related  $\lambda$-potentials
\begin{align*}
R^{\Phi, n}_\lambda f(x) := \mathbf{E}_x \left[ \int_0^\infty e^{-\lambda t} f(X^{\Phi, n}_t) dt \right]  , \quad \lambda > 0
\end{align*}
associated with $(P^{\Phi, n}_t)_{t\geq 0}$. We are interested in the asymptotics of the solutions to the time fractional equations associated with generators $A^n$ of the process $X^n$ on $E^n$, $n \in \mathbb{N}$. The main goal is to obtain asymptotic results for a wide class of time-changed processes driven by fractional equations. We approach this problem by obtaining  asymptotic results in terms of M-convergences of the Dirichlet forms $(\mathcal{E}^n, \mathcal{F}^n)$ (see Theorems \ref{main1} and  \ref{main2}). More precisely,  in Theorem  \ref{main1},  we obtain that a sequence of forms $\{\mathcal{E}^n\}$  M-converges to a form $\mathcal{E}$ in  $L^2(E, m)$   if and only if the sequence  $\{ R_\lambda^{\Phi, n} : \lambda >0\}$  converges to  $ R_\lambda^\Phi$ in the strong operator topology of $L^2(E, m)$.  Moreover, by  the uniqueness of the Laplace transform, we obtain a characterization in terms of the convergence of the  sequence $P^{\Phi, n}_t$  in the strong operator topology of  $L^2(E, m)$ uniformly on every interval $0<t\leq t_1$. In Theorem \ref{main2} we obtain that a sequence of forms $\{\mathcal{E}^n\}$  M-converges to a form $\mathcal{E}$ in  $L^2(E, m)$   if and only if $X^{\Phi, n} \to X^\Phi$ in distribution as $n\to \infty$ in $\mathbb{D}$.

An useful tools is Theorem \ref{thm:lpot-L} given for $\lambda \geq 0$ by mean of which we basically handle the $\lambda$-potential of an inverse process. A further useful tool is the representation \eqref{IDuseful} involving the $\lambda$-potential of the time-changed process. 

As the theory of Dirichlet forms  provides an appropriate functional framework to the variational description of  composite media  and irregular structures like fractals, our results can be applied to several contexts  and many areas. Thus, our results provide an useful tool for studying  time fractional equations in general scenarios. As far as we know, the novelty in the present work is the connection between convergence of stochastic processes driven by time fractional equations and convergence of related forms.\\

The plan of the paper is the following. In Section 2 we recall the definition of M-convergence of forms and the characterization in terms of convergence of the associated resolvents and semigroup operators. In Section 3 we introduce Feller processes related to  regular Dirichlet form  by using \cite{chen-book, FUK-book}. In Section 4 we introduce a subordinator $H,$ its Laplace exponent $\Phi$ and its the inverse $L$ and we consider  time-changed process $X^\Phi$. In Section 5 we introduce the operator $\mathfrak{D}^\Phi_t $ and the related time fractional problems. In Section 6 we state and prove our main results. Finally, in the last section we provide some examples and applications.

\section{Convergence of forms and resolvents}

We consider the Hilbert Space $\mathcal{H}=L^2(E,m),$ where $E$ is a given separable measurable space and $m$ a $\sigma-$finite positive measure on $E$. By $(u,v)=\int_E u v \,m(dx)$ we denote the inner product of $\mathcal{H}$. 

A form $\mathcal{E}$ in $\mathcal{H}$  will be any non-negative definite symmetric bilinear form $\mathcal{E}(u,v)$ defined on a linear subspace $D(\mathcal{E})$ of $\mathcal{H}$, the domain of $\mathcal{E}$.

A form $\mathcal{E}$ is closed in $\mathcal{H}$ if its domain $D(\mathcal{E})$  is complete under the inner product $\mathcal{E}(u,v)+(u,v)$.  The closeness of a given form in $\mathcal{H}$ can be  characterized in terms its quadratic functional only: a form $\mathcal{E}$ is closed in $\mathcal{H}$ if and only if the quadratic functional $\mathcal{E}(u,u)$ is lower semi continuous on $\mathcal{H}$. 

Given a form in $\mathcal{H}$ there exist a greatest lower semi continuous functional on $\mathcal{H}$ which is a minorant of the quadratic functional $\mathcal{E}(u,u)$ associated with $\mathcal{E}$ on $\mathcal{H}$. This uniquely determined  lower semi continuous functional on $\mathcal{H}$ is also quadratic and will be denoted by $\overline{\mathcal{E}}(u,u)$.  A closed form $\overline{\mathcal{E}}(u,v)$ is then defined on the domain $D(\overline{\mathcal{E}}) = \{ u\in \mathcal{H}, \overline{\mathcal{E}}(u,u) < \infty\}$. This form, uniquely determined by the initial form $\mathcal{E}$ is the relaxation of $\mathcal{E}$ in $\mathcal{H}$ and it is called the relaxed form $\overline{\mathcal{E}}$.

Given a closed form  $\mathcal{E}$ on $\mathcal{H}$, the resolvent $\{G_\lambda, \lambda >0\}$ is uniquely defined for each $\lambda >0$ by the identity 
\begin{align*}
\mathcal{E}_\lambda(G_\lambda u, v) = (u,v), \quad \textrm{for every} \quad v \in D(\mathcal{E})
\end{align*}
where 
\begin{align*}
\mathcal{E}_\lambda(u,v)= \mathcal{E}(u,v)+\lambda (u,v) \quad \textrm{for every} \quad u,v\in D(\mathcal{E}).
\end{align*}
We consider a sequence of forms $\mathcal{E}^n$ with $D(\mathcal{E}^n)$ on $L^2(E^n, m)$ where $E^n$ is a sequence separable measurable spaces. Let $F$ be such that $E \subset F$ and $E^n \subset F$. We  recall the notion of  $\emph{$M-$convergence}$ of forms on the Hilbert space $L^2(F, m)$, introduced in \cite{MOS3} (see also \cite{MOS1}). 
\begin{definition}\label{def1}
A sequence of forms $\{\mathcal{E}^n\}$  M-converges to a form $\mathcal{E}$ in $L^2(F, m)$ if 
\begin{itemize}
\item[(a)] For every $v_n$ converging weakly to $u$ in $L^2(F, m)$
\begin{equation}
\label{bb} \liminf  \mathcal{E}^n(v_n,v_n)\geq \mathcal{E}(u,u), \quad \textrm{as} \quad n \to + \infty.
 \end{equation}
\item[(b)] For every $u \in L^2(F, m)$ there exists $u_n$ converging strongly to $u$ in  $L^2(F, m)$ such that
\begin{equation}
\label{aa} \limsup \mathcal{E}^n(u_n,u_n)\leq \mathcal{E}(u,u), \quad \textrm{as} \quad n\to + \infty.
 \end{equation}
 \end{itemize}
\end{definition}
We point out that the forms $\mathcal{E}$, $\mathcal{E}^n$ can be defined in the whole of $L^2(F, m)$ by setting 
\begin{align*}
\mathcal{E}(u,u) = +\infty \quad \forall\, u \in L^2(F, m) \setminus D(\mathcal{E}).\\
\mathcal{E}^n(u,u) = +\infty \quad \forall\, u \in L^2(F, m) \setminus D(\mathcal{E}^n).
\end{align*}

We recall the notion of $\Gamma$-convergence (a weaker convergence in the space of forms, \cite{DG}) .
\begin{definition}\label{def2}
A sequence of forms $\{\mathcal{E}^n\}$ $ \Gamma$-converges to a form $\mathcal{E}$ in $L^2(F, m)$ if 
\begin{itemize}
\item[(a)] For every $v_n$ converging strongly  to $u$ in $L^2(F, m)$
\begin{equation}
\label{bb} \liminf  \mathcal{E}^n(v_n,v_n)\geq \mathcal{E}(u,u), \quad \textrm{as} \quad n \to + \infty.
 \end{equation}
\item[(b)] For every $u \in L^2(F, m)$ there exists $u_n$ converging strongly to $u$ in  $L^2(F, m)$ such that
\begin{equation}
\label{aa} \limsup \mathcal{E}^n(u_n,u_n)\leq \mathcal{E}(u,u), \quad \textrm{as} \quad n\to + \infty.
 \end{equation}
 \end{itemize}
\end{definition}
Also in this case, the forms involved are extended in the whole space. If the sequence of forms $\{\mathcal{E}^n\}$ is asymptotically compact in $L^2(F, m)$ (that is, every sequence $u_n$ with $\liminf\{\mathcal{E}^n(u_n,u_n)+(u_n,u_n) \}< \infty$ as $n\to \infty$ has a subsequence that converges strongly in  $L^2(F, m)$), them $M$ and $\Gamma$ convergence coincide, that is, $\{\mathcal{E}^n\}$ M-converges to a form $\mathcal{E}$ in $L^2(F, m)$ if  and only if M-converges to a form $\mathcal{E}$ in $L^2(F, m)$ (see Lemma 2.3.2 in \cite{MOS1}).

The convergence of forms according the Definition \ref{def1} can be characterized in terms of convergence of the resolvent operators of the relaxed forms (see Theorem 2.4.1 in \cite{MOS1} and Theorem 3.26 in \cite{Att}).
\begin{theorem}
\label{teo1} 
A sequence of forms $\{\mathcal{E}^n\}$  M-converges to a form $\mathcal{E}$ in $L^2(F,m)$ if and only if the sequence  $\{G_\lambda^n : \lambda >0\}$ of the resolvent operators associated with the 
relaxed forms  $\{\overline{\mathcal{E}^n}\}$ converges to the resolvent operator $G_\lambda$ of the form $\mathcal{E}$ in the strong operator topology of $L^2(F,m)$.
\end{theorem}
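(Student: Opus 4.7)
My plan rests on the variational characterization of the resolvent: for a closed form $\mathcal{E}$, the element $G_\lambda u$ is the unique minimizer of the coercive quadratic functional
\begin{equation*}
J_{\lambda,u}^{\mathcal{E}}(v) := \mathcal{E}(v,v) + \lambda\,\|v\|^2 - 2(u,v),
\end{equation*}
which is equivalent to the identity $\mathcal{E}_\lambda(G_\lambda u, v) = (u,v)$ for all $v \in D(\mathcal{E})$. Since a form and its relaxation have the same lower semicontinuous envelope, they share the same resolvent, so without loss of generality I may assume each $\mathcal{E}^n$ is closed. The proof then amounts to a Mosco-style analysis of the convergence of minimizers of $J_{\lambda,u}^{\mathcal{E}^n}$.

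For the forward direction, assume $\mathcal{E}^n \xrightarrow{M} \mathcal{E}$. Fix $u \in L^2(F,m)$ and set $u_n = G_\lambda^n u$. Testing the resolvent identity against $u_n$ itself yields $\mathcal{E}^n(u_n,u_n) + \lambda\|u_n\|^2 = (u,u_n)$, giving uniform bounds on $\|u_n\|$ and $\mathcal{E}^n(u_n,u_n)$. Thus along a subsequence $u_n \rightharpoonup \tilde{u}$ weakly, and condition (a) gives $\liminf \mathcal{E}^n(u_n,u_n) \geq \mathcal{E}(\tilde{u},\tilde{u})$. To identify $\tilde{u} = G_\lambda u$, I pick any $v \in D(\mathcal{E})$, use condition (b) to produce a recovery sequence $v_n \to v$ strongly with $\limsup \mathcal{E}^n(v_n,v_n) \leq \mathcal{E}(v,v)$, and compare via the minimizing property $J_{\lambda,u}^{\mathcal{E}^n}(u_n) \leq J_{\lambda,u}^{\mathcal{E}^n}(v_n)$; passing to the liminf on the left and limsup on the right yields $J_{\lambda,u}^{\mathcal{E}}(\tilde{u}) \leq J_{\lambda,u}^{\mathcal{E}}(v)$ for every $v$, hence $\tilde{u} = G_\lambda u$. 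Uniqueness promotes subsequential convergence to the whole sequence, and taking the recovery sequence for $\tilde{u}$ itself shows $\mathcal{E}^n(u_n,u_n) \to \mathcal{E}(\tilde{u},\tilde{u})$ and $\|u_n\| \to \|\tilde{u}\|$, so weak convergence upgrades to strong.

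For the reverse direction, assume $G_\lambda^n \to G_\lambda$ strongly for each $\lambda > 0$. For condition (b), given $u \in D(\mathcal{E})$ I use the Yosida-type approximation $u_{(\lambda)} := \lambda G_\lambda u \in D(\mathcal{E})$, which satisfies $u_{(\lambda)} \to u$ strongly in $L^2$ and $\mathcal{E}(u_{(\lambda)}, u_{(\lambda)}) \to \mathcal{E}(u,u)$ as $\lambda \to \infty$; setting $u_n := \lambda G_\lambda^n u$ and extracting a diagonal subsequence $\lambda = \lambda(n) \to \infty$ gives the required recovery sequence, since $\mathcal{E}^n(u_n,u_n)$ can be written as $\lambda(u - u_n, u_n)$ via the resolvent identity and therefore converges to $\lambda(u - u_{(\lambda)}, u_{(\lambda)}) = \mathcal{E}(u_{(\lambda)}, u_{(\lambda)})$. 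For condition (a), given $v_n \rightharpoonup u$ weakly, I exploit the convexity inequality $\mathcal{E}^n(v_n,v_n) \geq 2\mathcal{E}^n(w,v_n) - \mathcal{E}^n(w,w)$ with the test element $w = G_\lambda^n g$ for arbitrary $g \in L^2(F,m)$; the resolvent identity rewrites the right-hand side entirely in terms of inner products with $G_\lambda^n g$, namely $2(g,v_n) - 2\lambda(G_\lambda^n g, v_n) - (g, G_\lambda^n g) + \lambda(G_\lambda^n g, G_\lambda^n g)$. Passing to the limit using resolvent convergence and weak convergence of $v_n$ produces a lower bound involving only $u$ and $G_\lambda g$, and optimizing first over $g$ and then letting $\lambda \to \infty$ recovers $\mathcal{E}(u,u)$ on the right.

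The main obstacle is this last step of the reverse direction: the liminf inequality (a) must be extracted from information about minimizers only, and the trick of writing $\mathcal{E}^n(v_n,v_n)$ from below using the tangent-plane inequality with $w = G_\lambda^n g$ is what bridges pointwise resolvent convergence to the quadratic-form inequality under a weak topology. The interplay of the weak limit of $v_n$ with the strong limit of $G_\lambda^n g$ is what makes the inner products pass to the limit cleanly, and the subsequent optimization over $g$ and $\lambda$ is precisely what reconstructs the full form $\mathcal{E}(u,u)$ as an envelope of these linear lower bounds.
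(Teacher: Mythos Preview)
The paper does not actually prove this theorem: it is stated as a known result and attributed to Theorem 2.4.1 in Mosco's \emph{Composite media and asymptotic Dirichlet forms} and Theorem 3.26 in Attouch's monograph. Your argument is essentially the classical proof that appears in those references---the variational characterization of $G_\lambda u$ as the minimizer of $J^{\mathcal{E}}_{\lambda,u}$, the minimizer-stability argument for the forward implication, and the Yosida approximation together with the tangent-plane inequality tested against $w=G_\lambda^n g$ for the converse---so in that sense your approach coincides with the one the paper defers to.

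Your sketch is correct. The only place that deserves a word of caution is the final ``optimize over $g$, then $\lambda\to\infty$'' step: as written, the lower bound you obtain after passing to the limit is
\[
\liminf_n \mathcal{E}^n(v_n,v_n)\ \ge\ 2(g,u)-2\lambda(G_\lambda g,u)-(g,G_\lambda g)+\lambda\|G_\lambda g\|^2,
\]
and one has to argue that the supremum of the right-hand side over $g\in L^2$ and $\lambda>0$ equals $\mathcal{E}(u,u)$ (including the case $u\notin D(\mathcal{E})$, where both sides must be $+\infty$). This is handled in the cited sources either via the monotone approximating forms $\mathcal{E}^{(\lambda)}(u,u)=\lambda(u-\lambda G_\lambda u,u)\uparrow \mathcal{E}(u,u)$ or by noting that the range of $G_\lambda$ is $\mathcal{E}_\lambda$-dense in $D(\mathcal{E})$; you should make one of these explicit when writing the full proof.
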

As a consequence of Trotter-Kato characterizations of convergence of resolvent in terms of convergence of the related semigroups, (see Theorem IX 2.16 in  \cite{K}) the convergence of forms according the Definition \ref{def1} can be characterized in terms of convergence of the semigroups operators of the relaxed forms (see  Corollary 2.6.1 in \cite{MOS1}).
\begin{theorem}
\label{teo2} 
A sequence of densely defined forms $\{\mathcal{E}^n \}$  M-converges to a  densely defined form $\mathcal{E}$ in $L^2(F,m)$ if and only if for every $t>0$ the sequence  $\{T^n_t\}$ of the semigroup  operators associated with the relaxed forms  $\{\overline{\mathcal{E}^n}\}$ in $L^2(F,m)$ converges to the semigroup  operator $T_t$ associated with the form $\mathcal{E}$ in the strong operator topology of $L^2(F,m)$ uniformly on every interval $0<t\leq t_1$. 
\end{theorem}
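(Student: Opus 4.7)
The approach is to derive the statement as a direct corollary by chaining two known results: Theorem \ref{teo1}, which characterizes M-convergence in terms of strong convergence of the resolvents $\{G^n_\lambda\}$, with the Trotter--Kato theorem (Theorem IX.2.16 in \cite{K}), which characterizes strong resolvent convergence in terms of strong convergence of the associated semigroups, uniformly on compact time intervals. Since the conclusion is an ``if and only if'', I would prove both directions at once by exhibiting a chain of three mutually equivalent statements: M-convergence of $\{\mathcal{E}^n\}$, strong resolvent convergence $G^n_\lambda \to G_\lambda$ for every $\lambda>0$, and strong semigroup convergence $T^n_t \to T_t$ uniform on $0 < t \leq t_1$.

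First I would invoke the standard spectral-theoretic correspondence: each densely defined, closed, non-negative symmetric form $\overline{\mathcal{E}^n}$ (and $\mathcal{E}$) on the Hilbert space $L^2(F,m)$ determines a unique non-negative self-adjoint operator $-A^n$ (resp.\ $-A$), which via functional calculus generates a strongly continuous contraction semigroup $T^n_t = e^{tA^n}$ (resp.\ $T_t = e^{tA}$) together with the resolvents $G^n_\lambda = (\lambda - A^n)^{-1}$ (resp.\ $G_\lambda$) for $\lambda > 0$. This fixes a common functional-analytic framework in which the resolvents $G^n_\lambda$ appearing in Theorem \ref{teo1} and the semigroups $T^n_t$ appearing in the statement are linked by the usual Laplace-transform identity $G^n_\lambda = \int_0^\infty e^{-\lambda t} T^n_t\, dt$, and makes the hypotheses of the Trotter--Kato theorem available to us without further verification.

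Applying Theorem \ref{teo1}, M-convergence of $\{\mathcal{E}^n\}$ to $\mathcal{E}$ in $L^2(F,m)$ is equivalent to $G^n_\lambda u \to G_\lambda u$ in $L^2(F,m)$ for every $u \in L^2(F,m)$ and every $\lambda > 0$. Then, since all of the $T^n_t$ and $T_t$ are $C_0$-semigroups of contractions on the same space $L^2(F,m)$, the Trotter--Kato theorem tells us that strong convergence of the resolvents for every $\lambda > 0$ is equivalent to strong convergence $T^n_t u \to T_t u$ in $L^2(F,m)$ uniform on every compact sub-interval of $(0,\infty)$, which is exactly the uniformity on $0 < t \leq t_1$ stated in the theorem. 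Composing these two equivalences yields the claim.

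The only point where care is needed, rather than a true obstacle, is that the forms $\mathcal{E}^n$ are originally defined on $L^2(E^n,m)$ while the M-convergence is formulated on the common space $L^2(F,m)$ with $E \subset F$ and $E^n \subset F$; one must consistently view all semigroups and resolvents as acting on $L^2(F,m)$ (via the extension-by-zero convention already used in the statement of Definition \ref{def1}) so that the Trotter--Kato theorem applies on a single Hilbert space. Beyond this setup, the proof is entirely a chaining of two black-box results, with no additional estimates required.
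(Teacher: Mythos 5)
Your proposal is correct and matches the paper's own reasoning: the paper presents Theorem \ref{teo2} precisely as a consequence of the resolvent characterization in Theorem \ref{teo1} combined with the Trotter--Kato theorem (Theorem IX.2.16 in \cite{K}), citing Corollary 2.6.1 in \cite{MOS1}, which is exactly the chain of equivalences you spell out. Your added remarks on the self-adjoint operator correspondence and on viewing everything on the common space $L^2(F,m)$ are sensible fillings-in of the same argument, not a different route.
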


We conclude the section by recalling that a form $\mathcal{E}$ in $\mathcal{H}$ is Markovian if the following condition is satisfied: for any $\varepsilon>0$, there exists $\eta_\varepsilon:\mathbb{R} \to [\varepsilon, 1+\varepsilon]$ with $ \eta_\varepsilon(t)=t$ for $t\in[0,1]$ and $0\leq \eta_\varepsilon(t')-\eta_\varepsilon(t)\leq t'-t$ for every $t'<t$, such that $\eta_\varepsilon\circ u\in D(\mathcal{E})$ and $\mathcal{E}(\eta_\varepsilon\circ u,\eta_\varepsilon\circ u)\leq \mathcal{E}(u,u)$ whenever $u\in D(\mathcal{E})$. A Markovian closed symmetric form  on $H$  is called a Dirichlet form and (see \cite[pag. 27]{chen-book} or \cite{FUK-book}). A Dirichlet form $(\mathcal{E}, D(\mathcal{E}))$ on $\mathcal{H}$ is said regular if:
\begin{itemize}
\item[i)] $E$ is a locally compact, separable metric space and $m$ is a Radon measure on $E$ with $supp[m]=E$,
\item[ii)] $D(\mathcal{E}) \cap C_0(E)$ is $\mathcal{E}_1$-dense in $D(\mathcal{E})$,
\item[iii)] $D(\mathcal{E}) \cap C_0(E)$ is uniformly dense in $C_0(E)$.
\end{itemize}
We denoted by $C_0(E)$ the family of continuous functions $C(E)$ on $E$ with compact support and by $C_b(E)$ the set of continuous and bounded functions on $E$. In the next section we relate Dirichlet forms with Hunt processes.

\section{Dirichlet forms and processes}

Let $E$ be a locally compact, separable metric space and $E_\partial = E \cup \{\partial \}$ be the one-point compactification of $E$ (the point $\partial$ is adjoined to $E$ as the point at infinity if $E$ is not compact and as isolated point if $E$ is compact). Denote by $\mathcal{B}(E)$ the $\sigma$-field of Borel sets in $E$ ($\mathcal{B}_\partial$ is the $\sigma$-field in $E_\partial$ generated by $\mathcal{B}$). 

Let $X=\{(X_t)_{t \geq 0}, (\mathbf{P}_x)_{x \in E}\}$ with infinitesimal generator $(A, D(A))$ be a Markov process on $(E, \mathcal{B}(E))$ with transition function $p(t, x, B)$ on $[0, \infty) \times E \times \mathcal{B}(E)$ such that (see for example \cite{BSW13, chen-book,  kazuaki}):
\begin{itemize}
\item[i)] $\forall\, f \in C_\infty(E)$, we have
\begin{align*}
\int_E p(t, x, dy)f(y) \in C_\infty(E),
\end{align*}
where $C_\infty(E)$ is the set of continuous functions $C(E)$ on $E$ such that $u(x)\to 0$ as $x \to \partial$,
\item[ii)] $\forall\, \epsilon>0$, for the $\epsilon$-neighbourhood $U_\epsilon(x) = \{ y \in E\,:\, d(x,y) < \epsilon \}$, we have
\begin{align*}
\lim_{t \downarrow 0} \sup_{x \in E} p(t, x, E \setminus U_\epsilon(x) )= 0 ,
\end{align*}
\item[iii)] $\forall\, T>0$ and each compact $K \subset E$, we have
\begin{align*}
\lim_{x \to \partial} \sup_{0 \leq t \leq T } p(t, x, K) = 0.
\end{align*}
\end{itemize}
Throughout the paper, we use the fact that a function $f$ on $E$ can be extended to $E_\partial$ by setting $f(\partial) = 0$. The point $\partial$ is the cemetery point for $X$. Moreover, we write
\begin{align*}
P_t f(x) = \mathbf{E}_x[f(X_t)]
\end{align*}
where by $\mathbf{E}_x$ we denote the mean value with respect to $\mathbf{P}_x$ (the process starts from $x \in E$). From the Riesz representation theorem, the operator
\begin{equation}
P_tf(x) := \int_E p(t,x,dy) f(y), \quad f \in C_\infty(E)
\end{equation}
is uniquely defined and condition $i)$ says that $C_\infty(E)$ is an invariant subset of $C(E)$ for $P_t$. Since $p(t, x, \cdot)$ is a $C_\infty$-transition function (or Feller transition function), $(P_t)_{t\geq 0}$ is a non-negative and contraction semigroup on $C_\infty(E)$. This together with condition $ii)$ (that is, $X$ is uniformly stochastically continuous on $E$) and condition $iii)$ say that $P_t$ is strongly continuous in $t$ on $C_\infty(E)$. Then, $X$ is a strong Markov process, right-continuous with no discontinuity other that jumps (Feller process, see for example \cite[pag. 413]{chen-book}, \cite[Chapter 1]{BSW13}, \cite[pag. 432]{kazuaki}). For a Feller process with transition function $p(t, \cdot)$, there exists an Hunt process with transition function given by $p(t, \cdot)$ (\cite[Theorem A.2.2]{FUK-book}). Two processes on the same state space $(E, \mathcal{B}(E))$ are said to be equivalent if they have the same transition function (\cite[Definition 4.1]{BluGet68}). Let $m$ be a Radon measure on $E$ with $supp[m]=E$ such that $(P_t u, v) = (u, P_tv)$, that is, $P_t$ is $m$-symmetric. 
The transition function of an $m$-symmetric Hunt process uniquely determines a strongly continuous Markovian semigroup $T_t$ on $L^2(E, m)$ and the Dirichlet form $\mathcal{E}$ on $L^2(E, m)$ (see \cite[pag. 141]{FUK-book}). We recall that there is one to one correspondence between the family of closed symmetric forms on $\mathcal{H}$ and the family of non-positive definite self-adjoint operators $A$ on $\mathcal{H}$ where the correspondence is determined by $D(\mathcal{E})=D(\sqrt{-A})$ and $\mathcal{E}(u,v)=(\sqrt{-A} u, \sqrt{-A}v)$ (see \cite[Theorem 1.3.1]{FUK-book}).  Then, $X$ is equivalent to an $m$-symmetric Hunt process (see \cite[Theorem A.1.43]{chen-book} obtained by Chapter 1 of \cite[Theorem 1.9.4]{BluGet68}) on $(E, \mathcal{B}(E))$ whose Dirichlet form $(\mathcal{E}, D(\mathcal{E}))$ is on $L^2(E, m)$ (\cite[Theorem 1.5.1]{chen-book}). In particular, $P_t u(x) = T_tu(x)$, $x \in E \setminus N$ with $m(N)=0$ (see \cite[Proposition 3.1.9]{chen-book}). If the transition function also satisfies:
\begin{itemize}
\item[iv)] $\forall\, \epsilon>0$ and each compact $K \subset E$, we have
\begin{align*}
\lim_{t\downarrow 0} \frac{1}{t}\sup_{x \in K} p(t, x, E \setminus U_\epsilon(x)) = 0,
\end{align*}
\end{itemize}
then, $p(t,x,y)$ is a transition function of some continuous strong Markov process. In this case we say that $X$ is a diffusion (Feller diffusion) and the corresponding Dirichlet form is local (\cite[Theorem 4.5.1]{FUK-book}).

Throughout, we assume that the form $(\mathcal{E}, D(\mathcal{E}))$ on $L^2(E, m)$ of the associated Hunt process is regular (not necessarily equivalent to a Feller diffusion). Such a condition is not restrictive because for any given Dirichlet form $\mathcal{E}$, there exists uniquely an $m$-symmetric Hunt process whose Dirichlet form is $\mathcal{E}$ (\cite[pag. 143]{FUK-book}).

\section{Time-changes of processes}
Let $H=\{H_t\, , \, t \geq 0\}$ be a subordinator (see \cite{Bertoin} for a detailed discussion). Then, $H$ can be characterized by the Laplace exponent $\Phi$, that is, $$\mathbf{E}_0[\exp( - \lambda H_t)] = \exp(- t \Phi(\lambda)),$$
$\lambda \geq 0$. Moreover, if $\Phi$ is the Laplace exponent of a subordinator, then there exists a unique pair $(\mathtt{k}, \mathtt{d})$ of non-negative real numbers and a unique measure $\Pi$ on $(0, \infty)$ with $\int (1 \wedge z) \Pi(dz) < \infty$, such that for every $\lambda \geq 0$
\begin{equation}
\Phi(\lambda) = \mathtt{k} + \mathtt{d} \lambda + \int_0^\infty \left( 1 - e^{ - \lambda z} \right) \Pi(dz). \label{LevKinFormula}
\end{equation} 
The L\'evy-Khintchine representation in formula \eqref{LevKinFormula} is written in terms of the killing rate $\mathtt{k}=\Phi(0)$ and the drift coefficient $\mathtt{d}= \lim_{\lambda\to \infty} \Phi(\lambda)/\lambda$ where
\begin{align}
\label{tailSymb}
\frac{\Phi(\lambda)}{\lambda} = \mathtt{d} +  \int_0^\infty e^{-\lambda z} \overline{\Pi}(z)dz, \qquad \overline{\Pi}(z) = \mathtt{k} + \Pi((z, \infty))
\end{align}
and $\overline{\Pi}$ is the so called \emph{tail of the L\'evy measure}. We recall that $\Phi$ is a Bernstein function uniquely given by \eqref{LevKinFormula}, then $\Phi$ is a non-negative, non-decreasing and continuous function.  For details, see \cite{BerBook}. 

We define the inverse process $L =\{L_t\, , \, t \geq 0\}$ to a subordinator as $$L_t := \inf\{ s \geq 0\,:\, H_s \notin (0, t)\}.$$ We do not consider step-processes with $\Pi((0,\infty))<\infty$,  we focus only on strictly increasing subordinators with infinite measures (then $L$ turns out to be a continuous process). By definition of inverse process, we can write
\begin{align}
\label{relationPHL}
\mathbf{P}_0(L_t < s) = \mathbf{P}_0(H_s>t)
\end{align} 
with $H_0=0$ and $L_0=0$. The subordinator $H$ and the inverse $L$ can be respectively regarded as an hitting time and a local time of some Markov process (\cite{Bertoin}). Let us consider the independent processes $(X, L)$ introduced before. As usual we denote by $\zeta$ the lifetime of $X$.  If  $P_t$ is conservative the process $X$ has infinite lifetime and $p(t,x,E)=1$. In the following discussion we do not distinguishes between $E$ and $E_\partial$ for the non conservative case ($\partial$ is the absorbing set, for instance), we focus only on $\zeta$. Then we consider the base process $X=\{X_t,\; t < \zeta\}$. We focus now on the time-changed process $X^\Phi = \{ X^\Phi_t:=X_{L_t}\, ,\, L_t < \zeta\}$ associated with
\begin{align}
\label{semigPhi}
P^\Phi_t f(x) := \int_0^\infty P_s f(x) \mathbf{P}_0(L_t \in ds) = \mathbf{E}_x[f(X_{L_t}), \, L_t < \zeta]. 
\end{align}
Observe that $P_t^\Phi$ is not a semigroup (for $\Phi \neq id$, the identity map). Since $L$ is continuous we always have
\begin{align}
\label{semigPhi-L-continuity}
\mathbf{E}_x[f(X^\Phi_t), t < \zeta^\Phi] = \mathbf{E}_x[f(X^\Phi_t), L_t < \zeta]
\end{align}
where $\zeta^\Phi$ is the lifetime of $X^\Phi$. This also means that
\begin{align}
\label{semigPhi-1}
P^\Phi_t \mathbf{1}_E = \mathbf{P}_x(t < \zeta^\Phi) = \mathbf{P}_x(L_t < \zeta) = \int_0^\infty \mathbf{P}_x(s < \zeta) \mathbf{P}_0(L_t \in ds)
\end{align}
where the last identity can be obtained from \eqref{semigPhi} or by considering that $\zeta$, $L$ are independent.

\section{PDEs connection}
Let $M>0$ and $w\geq 0$. Let $\mathcal{M}_w$ be the set of (piecewise) continuous function on $[0, \infty)$ of exponential order $w$ such that $|u(t)| \leq M e^{wt}$. Denote by $\widetilde{u}$ the Laplace transform of $u$. Then, we define the operator $\mathfrak{D}^\Phi_t : \mathcal{M}_w \mapsto \mathcal{M}_w$ such that
\begin{align*}
\int_0^\infty e^{-\lambda t} \mathfrak{D}^\Phi_t u(t)\, dt = \Phi(\lambda) \widetilde{u}(\lambda) - \frac{\Phi(\lambda)}{\lambda} u(0), \quad \lambda > w
\end{align*}
where $\Phi$ is given in \eqref{LevKinFormula} with $\mathtt{d}=0$ and $\mathtt{k}=0$. Since $u$ is exponentially bounded, the integral $\widetilde{u}$ is absolutely convergent for $\lambda>w$.  By Lerch's theorem the inverse Laplace transforms $u$ and $\mathfrak{D}^\Phi_tu$ are uniquely defined. We note that
\begin{align}
\label{PhiConv}
\Phi(\lambda) \widetilde{u}(\lambda) - \frac{\Phi(\lambda)}{\lambda} u(0) = & \left( \lambda \widetilde{u}(\lambda) - u(0) \right) \frac{\Phi(\lambda)}{\lambda}
\end{align}
and thus, $\mathfrak{D}^\Phi_t$ can be written as a convolution involving the ordinary derivative and the inverse transform of \eqref{tailSymb} iff $u \in \mathcal{M}_w \cap C([0, \infty), \mathbb{R}_+)$ and $u^\prime \in \mathcal{M}_w$. By Young's inequality we also observe that
\begin{align}
\label{YoungSymb}
\int_0^\infty |\mathfrak{D}^\Phi_t u |^p dt \leq \left( \int_0^\infty |u^\prime |^p dt \right) \left( \lim_{\lambda \downarrow 0} \frac{\Phi(\lambda)}{\lambda} \right)^p, \qquad p \in [1, \infty)
\end{align}
where $\lim_{\lambda \downarrow 0} \Phi(\lambda) /\lambda$ is finite only in some cases, for example:
\begin{itemize}
\item inverse Gaussian subordinator with $\Phi(\lambda) = \sigma^{-2} \left(\sqrt{2\lambda \sigma^2 +\mu^2} - \mu \right)$ with $\sigma \neq 0$;
\item gamma subordinator with $\Phi(\lambda)= a \ln (1+ \lambda/b)$  with $ab>0$;
\item generalized stable subordinator with $\Phi(\lambda) = (\lambda + \gamma)^\alpha - \gamma^\alpha$ with $\gamma>0$ and $\alpha \in (0,1)$.
\end{itemize}
We notice that when $\Phi(\lambda)=\lambda$ (that is we deal with the ordinary derivative $D_t$) we have that $H_t = t$ and $L_t=t$ a.s. and in \eqref{YoungSymb} the equality holds. 
Explicit representations of $\mathfrak{D}^\Phi_t$ in terms of the tails of a L\'evy measure have been given in the recent works \cite{chen, toaldo}. 

In the present work  we consider the equation
\begin{align}
\label{time-frac-problem}
\mathfrak{D}^\Phi_t u = Au, \qquad u_0=f \in D(A)
\end{align}
whose probabilistic representation of the solution is written in terms of the time-changed process $X^\Phi$ obtained by the base process $X$ with generator $(A, D(A))$ and $L$ introduced above.  In particular, the process $X^\Phi$ can be considered in order to study the solution to \eqref{time-frac-problem}, that is (see formula \eqref{semigPhi})
\begin{align}
\label{sol-time-frac-problem}
u(t,x) = P^\Phi_t f(x).
\end{align}

First we provide the following useful result for $\lambda \geq 0$ which slightly generalizes the result in \cite{chen}.
\begin{theorem}
\label{thm:lpot-L}
Let $f \in \mathcal{M}_w$. Then,
\begin{equation}
\mathbf{E}_0\left[ \int_0^\infty e^{-\lambda t} f(L_t) dt \right] =\frac{\Phi(\lambda)}{\lambda} \mathbf{E}_0 \left[ \int_0^\infty e^{- \lambda H_t} f(t)\, dt  \right], \quad \lambda> \Phi^{-1}(w).
\end{equation}
\end{theorem}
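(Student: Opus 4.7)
The plan is to reduce everything to the classical Laplace-time identity
\begin{equation*}
\int_0^\infty e^{-\lambda t}\, \mathbf{P}_0(L_t \in ds)\, dt = \frac{\Phi(\lambda)}{\lambda}\, e^{-s\Phi(\lambda)}\, ds,
\end{equation*}
and then apply Fubini, using the hypothesis $f\in\mathcal{M}_w$ and $\lambda>\Phi^{-1}(w)$ to secure absolute convergence. Since $\Phi$ is continuous, non-decreasing and (under the standing assumption $\mathtt{k}=0$, $\mathtt{d}=0$) unbounded with $\Phi(0)=0$, the inverse $\Phi^{-1}(w)$ is well defined and one has $\Phi(\lambda)>w$ precisely when $\lambda>\Phi^{-1}(w)$.

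First I would derive the auxiliary identity. Using the duality \eqref{relationPHL}, namely $\mathbf{P}_0(L_t<s)=\mathbf{P}_0(H_s>t)$, together with $\mathbf{E}_0[e^{-\lambda H_s}]=e^{-s\Phi(\lambda)}$, one computes for $\lambda>0$
\begin{equation*}
\int_0^\infty e^{-\lambda t}\, \mathbf{P}_0(L_t<s)\, dt
=\int_0^\infty e^{-\lambda t}\,\mathbf{P}_0(H_s>t)\, dt
=\frac{1-\mathbf{E}_0[e^{-\lambda H_s}]}{\lambda}
=\frac{1-e^{-s\Phi(\lambda)}}{\lambda}.
\end{equation*}
Differentiating this distribution function in $s$ yields the displayed identity for the $t$-Laplace transform of the law of $L_t$.

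Next I would insert this into the left-hand side of the claim. Writing $\mathbf{E}_0[f(L_t)]=\int_0^\infty f(s)\,\mathbf{P}_0(L_t\in ds)$ and exchanging the order of integration,
\begin{equation*}
\mathbf{E}_0\!\left[\int_0^\infty e^{-\lambda t} f(L_t)\, dt\right]
=\int_0^\infty f(s)\left(\int_0^\infty e^{-\lambda t}\,\mathbf{P}_0(L_t\in ds)\, dt\right)
=\frac{\Phi(\lambda)}{\lambda}\int_0^\infty f(s)\, e^{-s\Phi(\lambda)}\, ds.
\end{equation*}
On the right-hand side, a direct application of Fubini together with $\mathbf{E}_0[e^{-\lambda H_t}]=e^{-t\Phi(\lambda)}$ gives
\begin{equation*}
\frac{\Phi(\lambda)}{\lambda}\,\mathbf{E}_0\!\left[\int_0^\infty e^{-\lambda H_t} f(t)\, dt\right]
=\frac{\Phi(\lambda)}{\lambda}\int_0^\infty f(t)\, e^{-t\Phi(\lambda)}\, dt,
\end{equation*}
matching the previous expression and proving the identity.

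The only delicate point — and what I expect to be the main obstacle — is the rigorous justification of the Fubini interchanges for $\lambda\geq 0$ (the extension beyond the range considered in \cite{chen}). Here the hypothesis $f\in\mathcal{M}_w$ gives $|f(s)|\leq M e^{ws}$, so that
\begin{equation*}
\int_0^\infty |f(s)|\,\frac{\Phi(\lambda)}{\lambda}\, e^{-s\Phi(\lambda)}\, ds
\leq \frac{M\,\Phi(\lambda)}{\lambda\,(\Phi(\lambda)-w)}<\infty
\end{equation*}
as soon as $\Phi(\lambda)>w$, i.e.\ $\lambda>\Phi^{-1}(w)$. This bound simultaneously controls the integrand in the displayed Fubini step on the left and the integrand $|f(t)|e^{-t\Phi(\lambda)}$ on the right, so both exchanges are legitimate under the stated range of $\lambda$, completing the proof.
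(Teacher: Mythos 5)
Your proof is correct and follows essentially the same route as the paper's: both exploit the inverse relation $\mathbf{P}_0(L_t<s)=\mathbf{P}_0(H_s>t)$ to identify the $t$-Laplace transform of the law of $L_t$ as $\frac{\Phi(\lambda)}{\lambda}e^{-s\Phi(\lambda)}ds$ and then conclude by Fubini. Your version merely differentiates the transformed distribution function rather than pulling $-\frac{d}{ds}$ under the $t$-integral, and it spells out the absolute-convergence bound $|f(s)|\leq Me^{ws}$ with $\Phi(\lambda)>w$ that the paper leaves implicit.
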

\begin{proof}
From \eqref{relationPHL} we get that
\begin{align*}
\mathbf{E}_0\left[ \int_0^\infty e^{-\lambda t} f(L_t) dt \right]= & \int_0^\infty \int_0^\infty e^{-\lambda t} f(s) \mathbf{P}_0(L_t \in ds) \, dt \\
= & \int_0^\infty \int_0^\infty e^{-\lambda t}\left( - \frac{d}{d s} \mathbf{P}_0(H_s \leq t) \right)  f(s)\, dt \,ds\\ 
= &  \int_0^\infty \left( - \frac{d}{d s} \frac{e^{- s \Phi(\lambda)}}{\lambda} \right) f(s)\, ds\\
= & \frac{\Phi(\lambda)}{\lambda} \int_0^\infty e^{-s\Phi(\lambda)} f(s)\, ds, \quad \Phi(\lambda)> w.
\end{align*}
Since $\Phi$ is non-decreasing, $\Phi^{-1}$ is non-decreasing and the proof is completed.
\end{proof}

We now study the problem \eqref{time-frac-problem} by first considering the problem $\partial_t u = A u$ with initial datum $f \in D(A)$. The solution $u$ is unique and has the probabilistic representation $P_tf(x)=\mathbf{E}_x \left[ f(X_t)\right]$ with $\lambda$-potential 
\begin{align}
& R_\lambda f(x) :=  \mathbf{E}_x \left[ \int_0^\infty e^{-\lambda t} f(X_t) dt  \right], \quad \lambda>0.
\end{align}
We recall that, for each $\lambda>0$, $G_\lambda$ introduced in Theorem \ref{teo1} is a quasi continuous version of $R_\lambda$ (\cite[Proposition 3.1.9]{chen-book}). For the solution to \eqref{time-frac-problem} with $f \in D(A)$ we define the $\lambda$-potential
\begin{align}
& R^\Phi_\lambda f(x) :=  \mathbf{E}_x \left[ \int_0^\infty e^{-\lambda t} f(X^\Phi_t) dt \right], \quad \lambda> 0
\end{align}
and obtain the following result already given in \cite{chen, toaldo} in alternative forms, also due to the definition of $\mathfrak{D}^\Phi_t$. In \cite{chen} the author considers a strong Markov process associated with a uniformly bounded strongly continuous semigroup in some Banach space, the fractional equation governing the time-changed process involves a time fractional operator of Riemann-Liouville type. In \cite{toaldo}, the author considers $C_0$-semigroups and the fractional operator in time of Caputo type. Previous works focus on pseudo-differential operators (\cite{MShef08}, for example) or integro-differential operators (\cite{MacSchi15}). We follow a different approach based on the simple relation \eqref{IDuseful} below which turns out to play a key role also in the proof of the main results of our work (see formula \eqref{IDusefuln} below).   

\begin{theorem}
\label{time-frac-THM}
The function \eqref{sol-time-frac-problem} is the unique strong solution in $L^2(E, m)$ to \eqref{time-frac-problem} in the sense that:
\begin{enumerate}
\item $\varphi: t \mapsto u(t, \cdot)$ is such that $\varphi \in C([0, \infty), \mathbb{R}_+)$ and $\varphi^\prime  \in \mathcal{M}_0$,
\item $\vartheta : x \mapsto u(\cdot, x)$ is such that $\vartheta, A\vartheta \in D(A)$,
\item $\forall\, t > 0$, $\mathfrak{D}^\Phi_t u(t,x) = Au(t,x)$ holds $m$-a.e in $E$,
\item $\forall\, x \in E$, $u(t,x) \to f(x)$ as $t \downarrow 0$.
\end{enumerate}
\end{theorem}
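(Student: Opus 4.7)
The plan is to pass to Laplace transforms in time, reducing the fractional equation $\mathfrak{D}^\Phi_t u = Au$ to an algebraic identity for the $\lambda$-potential by means of Theorem \ref{thm:lpot-L}, and then to invert using Lerch's theorem. The central object is the formula linking $R^\Phi_\lambda$ to $R_{\Phi(\lambda)}$, which is the bridge between the time-changed process and the base process.

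First I would apply Theorem \ref{thm:lpot-L} to the function $s \mapsto P_s f(x)$, which lies in $\mathcal{M}_0$ since $(P_t)$ is a contraction semigroup on $C_\infty(E)$. Combining \eqref{semigPhi} with Fubini and the independence of $X$ and $L$ gives
\begin{equation*}
R^\Phi_\lambda f(x) = \int_0^\infty e^{-\lambda t}\int_0^\infty P_s f(x)\,\mathbf{P}_0(L_t \in ds)\,dt = \frac{\Phi(\lambda)}{\lambda}\, R_{\Phi(\lambda)} f(x), \qquad \lambda>0,
\end{equation*}
which is precisely the identity the paper will later invoke as \eqref{IDuseful}. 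Since $f \in D(A)$ and $R_\mu f = (\mu - A)^{-1} f$, one has $A R_\mu f = \mu R_\mu f - f$; applying $A$ to the displayed relation yields
\begin{equation*}
A R^\Phi_\lambda f(x) = \frac{\Phi(\lambda)^2}{\lambda}\,R_{\Phi(\lambda)} f(x) - \frac{\Phi(\lambda)}{\lambda} f(x) = \Phi(\lambda)\, R^\Phi_\lambda f(x) - \frac{\Phi(\lambda)}{\lambda} f(x).
\end{equation*}
By the very definition of $\mathfrak{D}^\Phi_t$, the right-hand side is the Laplace transform in $t$ of $\mathfrak{D}^\Phi_t u(t,\cdot)$, while the left-hand side is the Laplace transform of $Au(t,\cdot)$ once $A$ is commuted with the time integral (legitimate because $A$ is closed and $P_s$, hence $P^\Phi_t$, leaves $D(A)$ invariant). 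Lerch's theorem then delivers item (3), and the same invariance gives item (2). Uniqueness follows by the same device: if $v$ satisfies (1)--(4), its Laplace transform $\widetilde v$ obeys $(\Phi(\lambda) - A)\widetilde v(\lambda,x) = \tfrac{\Phi(\lambda)}{\lambda} f(x)$, forcing $\widetilde v = \tfrac{\Phi(\lambda)}{\lambda} R_{\Phi(\lambda)} f = \widetilde u$ and hence $v = u$.

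For the regularity claims, continuity of $\varphi$ in (1) and the initial condition (4) follow from the almost-sure continuity of $L$ with $L_0=0$, the strong continuity of $(P_t)$ on $C_\infty(E)$, and dominated convergence applied to \eqref{semigPhi} via $\|P_s f\|_\infty \leq \|f\|_\infty$. For $\varphi' \in \mathcal{M}_0$, one differentiates \eqref{semigPhi} under the integral sign using $\partial_s P_s f = P_s(Af)$ together with $\|P_s(Af)\|_\infty \leq \|Af\|_\infty$; this is exactly where the hypothesis $f \in D(A)$ (rather than merely $f \in L^2$) is indispensable, since otherwise $\partial_t P^\Phi_t f$ need not be bounded. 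The main obstacle I anticipate lies in making this regularity step rigorous: justifying differentiation under the Lebesgue--Stieltjes integral in $\mathbf{P}_0(L_t \in ds)$, together with the commutation of $A$ with the time integral that underlies the Laplace-transform computation. Both hinge on the $D(A)$ regularity of $f$ and on fine properties of the law of $L_t$ near zero under the standing assumption $\mathtt{k}=\mathtt{d}=0$ with infinite L\'evy measure, which ensures $L$ is continuous and that the relevant densities behave well enough for the interchange to be valid.
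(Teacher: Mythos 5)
Your proposal is correct and follows essentially the same route as the paper: both hinge on Theorem \ref{thm:lpot-L} to obtain $\lambda R^\Phi_\lambda f = \Phi(\lambda) R_{\Phi(\lambda)} f$, then read off $A R^\Phi_\lambda f = \Phi(\lambda) R^\Phi_\lambda f - \tfrac{\Phi(\lambda)}{\lambda} f$ as the Laplace transform of the equation and conclude by uniqueness of the Laplace transform. The only cosmetic differences are that you invoke the resolvent identity $A R_\mu f = \mu R_\mu f - f$ directly where the paper rederives it via Dynkin's formula, and your uniqueness step (invertibility of $\Phi(\lambda)-A$ on the transform side) is a streamlined version of the paper's argument comparing a putative solution with $P^\Phi_t f$.
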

\begin{proof}
From Theorem \ref{thm:lpot-L}, the $\lambda$-potential of the time-changed process is given by
\begin{align*}
R^\Phi_\lambda f(x) = \frac{\Phi(\lambda)}{\lambda} \int_0^\infty e^{-s\Phi(\lambda)} P_s f(x)\, ds =  \frac{\Phi(\lambda)}{\lambda} R_{\Phi(\lambda)} f(x)
\end{align*}
from which we obtain the useful identity
\begin{equation}
\label{IDuseful}
\lambda R^\Phi_\lambda f = \Phi(\lambda) R_{\Phi(\lambda)} f .
\end{equation}
Since $P_t$ is strongly continuous, we have :
\begin{align*}
f= \lim_{\lambda \to \infty} \Phi(\lambda) R_{\Phi(\lambda)} f = \lim_{\lambda \to \infty} \lambda R^\Phi_\lambda f \quad \textrm{that is} \quad P^\Phi_t f \to f \textrm{ as } t \downarrow 0 
\end{align*}
and, for $f \in D(A)$, the mapping $ [0, \infty ) \ni t \mapsto P_tf \in C_\infty(E)$ is differentiable, $\frac{d}{d t}P_tf = AP_t f = P_t Af$. Thus, $A R_\lambda f = R_\lambda A f$ and we obtain the identity 
\begin{align*}
A R^\Phi_\lambda f(x) = \frac{\Phi(\lambda)}{\lambda}  \mathbf{E}_x \left[ \int_0^\infty e^{-t \Phi(\lambda)} Af(X_t) dt  \right].
\end{align*}
Notice that, for $\lambda > 0$, we have that $R^\Phi_\lambda f, AR^\Phi_\lambda f \in D(A)$, 
\begin{align*}
\sup_{x \in E} |R^\Phi_\lambda f(x)| \leq \sup_{x \in E} \frac{\Phi(\lambda)}{\lambda} \int_0^\infty e^{- t \Phi(\lambda)} \big| \mathbf{E}_x[f(X_t)] \big| dt \leq \frac{M}{\lambda}  \sup_{x \in E} |f(x)|,
\end{align*}
and, let $g=Af$,
\begin{align*}
\sup_{x \in E} |A R^\Phi_\lambda f(x)| \leq \sup_{x \in E} \frac{\Phi(\lambda)}{\lambda} \int_0^\infty e^{- t \Phi(\lambda)} \big| \mathbf{E}_x[g(X_t)] \big| dt.
\end{align*}
Under the assumptions above on $A$ (and therefore on $P_t$), from the Dynkin's formula,  the process  
\begin{align*}
X_t^f = f(X_t) - f(X_0) - \int_0^t Af(X_s) ds
\end{align*}
with $f \in D(A)$ is a martingale under $(\mathbf{P}_{x})_{x \in E}$ (the martingale problem is uniquely solvable, see \cite{CASTEREN} for instance). After simple manipulation, we obtain that
\begin{align*}
R_\lambda Af(x) = & \mathbf{E}_x \left[ \int_0^\infty e^{- \lambda t} Af(X_t) dt  \right]\\ 
= & \lambda \mathbf{E}_x \left[ \int_0^\infty e^{- \lambda t} \int_0^t Af(X_s) ds \, dt  \right]\\
= &  \lambda \mathbf{E}_x \left[ \int_0^\infty e^{- \lambda t} \left( f(X_t) - f(X_0) \right) dt  \right] =   \lambda R_{\lambda} f(x) - f(x).
\end{align*}
From \eqref{IDuseful}, we can write
\begin{align*}
\mathbf{E}_x \left[ \int_0^\infty e^{-t \Phi(\lambda)} Af(X_t) dt  \right] = \lambda R^{\Phi}_\lambda f(x) - f(x)
\end{align*}
and we get that, $\forall\, x \in E \setminus N$ (with $m(N)=0$)
\begin{align*}
A R^{\Phi}_\lambda f(x) = \Phi(\lambda) R^\Phi_\lambda f(x) - \frac{\Phi(\lambda)}{\lambda} f(x) = \int_0^\infty e^{-\lambda t} \mathfrak{D}^\Phi_t u(t,x) dt 
\end{align*}
and we find a solution in $L^2(E, m)$.

We prove uniqueness of the solution by considering 
\begin{align*}
u^*(t, x) = u(t,x) - \mathbf{E}_x \left[ f(X^\Phi_t) \right]
\end{align*} 
where $u$ is a solution to \eqref{time-frac-problem} with
\begin{align*}
R^*_\lambda f(x) = \widetilde{u}(\lambda, x) - R^\Phi_\lambda f(x).
\end{align*}
Then, $u^*$ solves \eqref{time-frac-problem} with $u^*(0,x) =0$. 
Let us assume that $v_1$ and $v_2$ are two different solutions to $\partial_t v = Av$ with initial datum $f$. The operator $\mathfrak{D}^\Phi_t$ is uniquely defined by its Laplace symbol. Then, from the uniqueness of the Laplace transform and Theorem \ref{thm:lpot-L}, we can write from \eqref{semigPhi},
\begin{align*}
R^*_\lambda f(x) = \frac{\Phi(\lambda)}{\lambda} \int_0^\infty e^{-t \Phi(\lambda)} \left(v_1(t,x) - v_2(t, x) \right) dt.
\end{align*} 
From the uniqueness on $L^2(E,m)$ of the solution $v$ we have that
%
\begin{align*}
R^*_\lambda f(x) = \int_0^\infty e^{-\lambda t} u^*(t, x) dt = 0 \quad \textrm{which implies} \quad u^*(t, x)=0\; \forall\, t, \; m\textrm{-a.e.}
\end{align*}
and this concludes the proof.
\end{proof}

\begin{remark}
In \cite{chen} the author proves existence and uniqueness of strong solutions to general time fractional equations with initial data $f \in D(A)$. In \cite{CKKW} the authors establish existence and uniqueness for weak solutions and initial data $f \in L^2$.

\end{remark}

\begin{remark}
As also pointed out recently in \cite{chen}, $X^\Phi$ can have infinite lifetime. Indeed, we have that $P^\Phi_t \mathbf{1}_E(x) = \mathbf{E}_x[\mathbf{1}_{(t < \zeta^\Phi)}]$ and then 
\begin{align}
\label{meanExample}
\mathbf{E}_x[\zeta^\Phi] = \lim_{\lambda \downarrow 0} R^\Phi_\lambda \mathbf{1}_{E}(x) = \left( \lim_{\lambda \downarrow 0} \frac{\Phi(\lambda)}{\lambda} \right) \mathbf{E}_{x}[\zeta] = \Phi^\prime(0) \mathbf{E}_{x}[\zeta]
\end{align}
with $\Phi^\prime(0)=\mathbf{E}_0[H_1]>0$. 

Notice that the process $X^\Phi$ could have a finite lifetime and an infinite mean lifetime. If $P_t$ is conservative, $\mathbf{P}_x(\zeta > t) = 1$ for all $t\geq 0$ and the mean lifetime if obviously infinite. Indeed, if $\mathbf{P}_x(X_t \in E) = 1$, then $R^\Phi_\lambda \mathbf{1}_{E}(x) = 1/\lambda$. On the other hand, the fact that $\mathbf{P}_x(X_t \in E_\partial)=1$ says that $\mathbf{P}_x(\zeta>t) \leq  1$. 

Let $\zeta$ be exponentially distributed with some parameter $c>0$ independently from the starting point $x\in E$. Then, $\mathbf{E}[\zeta]=1/c < \infty$.  We exactly have that
\begin{align*}
\mathbf{P}_x(\zeta^\Phi > t) = \mathbf{E}_0[e^{-cL_t}] \quad \textrm{and} \quad \lim_{x \to \partial} \mathbf{P}_x(\zeta^\Phi > t)=0.
\end{align*}
By considering $f(s)=e^{-cs}$ in Theorem \ref{thm:lpot-L}, we obtain
\begin{align*}
R^\Phi_\lambda \mathbf{1}_{E}(x)=\int_0^\infty e^{-\lambda t} \mathbf{E}_0[e^{-cL_t}] dt = \frac{1}{\lambda} \frac{\Phi(\lambda)}{c + \Phi(\lambda)}, \quad \lambda > \Phi^{-1}(c)
\end{align*}
whose asymptotic behaviour agrees with \eqref{meanExample}.
\end{remark}

\section{Main results}

Let $X$ with generator $(A, D(A))$ be the process on $(E,\mathcal{B}(E)$ associated with the Dirichlet form $(\mathcal{E}, \mathcal{F})$, $\mathcal{F}=D(\mathcal{E})$ on $L^2(E, m)$. Let $X^n$ and $X^{\Phi, n}$ be sequences of processes as introduced in the previous sections. The processes $X^n$ on $(E^n, \mathcal{B}(E^n))$ with generator $(A^n, D(A^n))$ are associated with the Dirichlet forms $(\mathcal{E}^n, \mathcal{F}^n)$, $\mathcal{F}^n = D(\mathcal{E}^n)$ on $L^2(E^n, m)$. Let us write the sequences of $\lambda$-potentials
\begin{align*}
R^{n}_\lambda f(x) := \mathbf{E}_x \left[ \int_0^\infty e^{-\lambda t} f(X^{n}_t) dt \right], \quad \lambda > 0
\end{align*}
associated with $(P^n_t)_{t \geq 0}$ and
\begin{align*}
R^{\Phi, n}_\lambda f(x) := \mathbf{E}_x \left[ \int_0^\infty e^{-\lambda t} f(X^{\Phi, n}_t) dt \right]  , \quad \lambda > 0
\end{align*}
associated with $(P^{\Phi, n}_t)_{t\geq 0}$. From \eqref{IDuseful}, we recover the identity
\begin{align}
\label{IDusefuln}
\lambda R^{\Phi, n}_\lambda = \Phi(\lambda) R^n_{\Phi(\lambda)}, \quad n \in \mathbb{N}.
\end{align}

Denote by $T^{\Phi,n}_t$ and $G^{\Phi,n}_\lambda$ the quasi continuous versions of $P^{\Phi,n}_t$ and $R^{\Phi,n}_\lambda$ respectively. Denote by $T^\Phi_t$ and $G^\Phi_\lambda$ the quasi continuous versions of $P^\Phi_t$ and $R^\Phi_\lambda$ respectively. The discussion below is concerned with the limit object $X^\Phi = X \circ L$ on $(E, \mathcal{B}(E))$.

\begin{theorem}\label{main1}
A sequence of forms $\{\mathcal{E}^n\}$  M-converges to a form $\mathcal{E}$ in  $L^2(F, m)$   if and only if the sequence  $\{ G_\lambda^{\Phi, n} : \lambda >0\}$  converges to  $ G_\lambda^\Phi$ in the strong operator topology of $L^2(F, m)$.
A sequence of densely defined forms $\{\mathcal{E}^n \}$  M-converges  to a form $\mathcal{E}$ in  $L^2(F, m)$ if and only if for every $t>0$ the sequence $T^{\Phi, n}_t f_n$ converges to $T^\Phi_t $  in the strong operator topology of  $L^2(F, m)$ uniformly on every interval $0<t\leq t_1$. 
\end{theorem}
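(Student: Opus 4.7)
The plan is to reduce both equivalences to the known characterizations of M-convergence in terms of the resolvents (Theorem \ref{teo1}) and semigroups (Theorem \ref{teo2}) of the \emph{un}-subordinated forms, using the subordination identity \eqref{IDusefuln} as the only bridge needed. Concretely, the identity
\begin{align*}
G^{\Phi,n}_\lambda = \frac{\Phi(\lambda)}{\lambda}\, G^n_{\Phi(\lambda)}, \qquad G^\Phi_\lambda = \frac{\Phi(\lambda)}{\lambda}\, G_{\Phi(\lambda)}, \qquad \lambda>0,
\end{align*}
turns the statement about time-changed resolvents into a statement about ordinary resolvents evaluated at a different parameter, and the subordination formula \eqref{semigPhi} does the analogous job at the semigroup level.

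For the first equivalence, I would argue as follows. If $\mathcal{E}^n$ M-converges to $\mathcal{E}$, Theorem \ref{teo1} gives $G^n_\mu \to G_\mu$ in the strong operator topology of $L^2(F,m)$ for every $\mu>0$; specialising at $\mu=\Phi(\lambda)>0$ and multiplying by the scalar $\Phi(\lambda)/\lambda$ yields $G^{\Phi,n}_\lambda\to G^\Phi_\lambda$ strongly. Conversely, inverting the identity, strong convergence of $G^{\Phi,n}_\lambda$ to $G^\Phi_\lambda$ gives $G^n_{\Phi(\lambda)}\to G_{\Phi(\lambda)}$ strongly. Because the paper assumes strictly increasing subordinators with infinite L\'evy measure and $\mathtt{k}=\mathtt{d}=0$, the exponent $\Phi\colon(0,\infty)\to(0,\infty)$ is a continuous strictly increasing bijection, so the set $\{\Phi(\lambda):\lambda>0\}$ exhausts $(0,\infty)$ and Theorem \ref{teo1} returns M-convergence.

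For the second equivalence I would work through the subordination representation $T^{\Phi,n}_t f = \int_0^\infty T^n_s f\, \mathbf{P}_0(L_t\in ds)$ (the quasi-continuous version of \eqref{semigPhi}). Given M-convergence, Theorem \ref{teo2} supplies $T^n_s\to T_s$ strongly, uniformly on compact $s$-intervals. Splitting the integral at a threshold $S$, using the contraction bound $\|T^n_s f - T_s f\|\le 2\|f\|$, yields
\begin{align*}
\|T^{\Phi,n}_t f - T^\Phi_t f\|_{L^2} \;\le\; \sup_{0\le s\le S}\|T^n_s f - T_s f\|_{L^2} \;+\; 2\|f\|_{L^2}\, \mathbf{P}_0(L_t>S),
\end{align*}
and since $L$ is nondecreasing the tail probability is dominated by $\mathbf{P}_0(L_{t_1}>S)$, which vanishes as $S\to\infty$. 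This delivers the desired uniform convergence on $(0,t_1]$. For the converse I would write $R^{\Phi,n}_\lambda f = \int_0^\infty e^{-\lambda t}\, T^{\Phi,n}_t f\, dt$ (by Fubini) and invoke dominated convergence, with the uniform dominator $\|T^{\Phi,n}_t f\|_{L^2}\le\|f\|_{L^2}$, to conclude $R^{\Phi,n}_\lambda f\to R^\Phi_\lambda f$ strongly for every $\lambda>0$; the first part of the theorem then returns M-convergence.

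The main obstacle I anticipate is not structural but a uniformity issue: the forward implication of the semigroup equivalence requires control that is simultaneous in $n$ and $t\in(0,t_1]$, and it is precisely the monotonicity of $L_t$ in $t$ that lets the single tail bound $\mathbf{P}_0(L_{t_1}>S)$ serve for all $t\le t_1$. Everything else is bookkeeping around the identity \eqref{IDusefuln} and the elementary fact that $\Phi$ is a continuous bijection of $(0,\infty)$ under the standing hypotheses on the subordinator.
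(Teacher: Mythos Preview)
Your proof is correct, and for the resolvent equivalence it follows exactly the paper's route through the identity \eqref{IDusefuln} and Theorem~\ref{teo1}; you are in fact more careful than the paper in the converse direction, where you explicitly note that $\Phi$ is a continuous bijection of $(0,\infty)$ so that convergence of $G^n_{\Phi(\lambda)}$ for all $\lambda>0$ really does give convergence of $G^n_\mu$ for all $\mu>0$.

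For the semigroup equivalence your argument is more substantive than the paper's, which simply writes ``by the uniqueness of the Laplace transform we obtain the last characterization'' and stops. You instead work directly with the subordination integral $T^{\Phi,n}_t f=\int_0^\infty T^n_s f\,\mathbf{P}_0(L_t\in ds)$, split at a threshold $S$, and use the monotonicity of $t\mapsto L_t$ to get a tail bound uniform in $t\le t_1$. This buys you an honest proof of the uniform-in-$t$ convergence, which a bare appeal to Laplace uniqueness does not obviously deliver (Laplace injectivity gives pointwise-in-$t$ convergence at best without further argument). Your converse, passing from $T^{\Phi,n}_t\to T^\Phi_t$ to $R^{\Phi,n}_\lambda\to R^\Phi_\lambda$ via dominated convergence in the Bochner integral and then invoking the first part, is likewise cleaner than what the paper writes. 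So: same skeleton as the paper for part one, a genuinely more detailed and self-contained argument for part two.
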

\begin{proof}
First we consider the sequence $G^n_\lambda$. From Theorem \ref{teo1},  we have convergence of the forms ${\mathcal{E}^n}$ if and only if we  have convergence of the associated resolvents $G^n_\lambda $ and the corresponding semigroups $T^n_t.$    Since $G^n_\lambda$ is a quasi continuous version of $R^n_\lambda$ which is related to $R^{\Phi, n}_\lambda$ by formula \eqref{IDusefuln}, we can easily obtain  by formula \eqref{IDuseful}, that $G^{\Phi, n}_\lambda\to G^\Phi_\lambda $  in the strong operator topology of $L^2(F, m)$.
By  the uniqueness of the Laplace transform we obtain the last characterization.
\end{proof}

Convergence of semigroups $P^n_t$ (the quasi continuous version of $T^n_t$) implies convergence of the finite dimensional distributions, indeed from the Markov property we have that
\begin{align*}
\mathbf{E}_x[\mathbf{1}_{B_1}(X^n_{t_1}),\mathbf{1}_{B_2}(X^n_{t_2}),\ldots, \mathbf{1}_{B_k}(X^n_{t_k})] = P^n_{t_1-t_0}\mathbf{1}_{B_1} P^n_{t_2-t_1}\mathbf{1}_{B_2} \cdots P^n_{t_k - t_{k-1}}\mathbf{1}_{B_k}(x).
\end{align*}

Let $\mathbb{D}$ be the set of continuous functions from $[0, \infty)$ to $E_\partial$ which are right continuous on $[0, \infty)$ with left limits on $(0, \infty)$. Let $\mathbb{D}_0$ the set of non-decreasing continuous function from $[0, \infty) $ to $[0, \infty)$.

\begin{theorem}
\label{teoK}
(Kurtz, \cite{KurtzRTC}. Random time change theorem). Suppose that $X^n$, $X$ are in $\mathbb{D}$ and $L^n$, $L$ are in $\mathbb{D}_0$. If $(X^n, L^n)$ converges to $(X, L)$ in distribution as $n\to \infty$, then $X^n \circ L^n$ converges to $X \circ L$ in distribution as $n \to \infty$.
\end{theorem}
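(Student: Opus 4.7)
The plan is to reduce the statement to a pathwise composition lemma via Skorohod's representation theorem combined with the continuous mapping theorem. First I would apply Skorohod representation on the Polish product space $\mathbb{D} \times \mathbb{D}_0$ to realize $(X^n, L^n)$ and $(X, L)$ on a common probability space so that $(X^n, L^n) \to (X, L)$ almost surely (the originals and realizations are equal in law, which is all that is needed since convergence in distribution is a law property). It then suffices to prove the deterministic claim: whenever $x_n \to x$ in the Skorohod $J_1$-topology of $\mathbb{D}$ and $\ell_n \to \ell$ in $\mathbb{D}_0$ with $\ell$ continuous and non-decreasing, then $x_n \circ \ell_n \to x \circ \ell$ in $\mathbb{D}$.

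Two standard facts drive the deterministic step. First, on the subspace $\mathbb{D}_0$ of continuous non-decreasing paths the Skorohod topology coincides with uniform convergence on compacts, so $\ell_n \to \ell$ locally uniformly. Second, by the parametric characterization of $J_1$-convergence there exist continuous strictly increasing time-changes $\psi_n \colon [0,\infty) \to [0,\infty)$ with $\psi_n(0)=0$ such that $\psi_n \to \mathrm{id}$ and $x_n \circ \psi_n \to x$ both uniformly on $[0,T]$ for each $T$ that is a continuity point of $x$. Writing
\[
x_n \circ \ell_n \;=\; (x_n \circ \psi_n) \circ (\psi_n^{-1} \circ \ell_n),
\]
the inner map $\tilde{\ell}_n := \psi_n^{-1} \circ \ell_n$ is still continuous, non-decreasing, and converges to $\ell$ locally uniformly, while the outer $x_n \circ \psi_n$ converges to $x$ locally uniformly. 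A pointwise bookkeeping at a continuity point $t$ of $x \circ \ell$ then shows $(x_n \circ \psi_n)(\tilde{\ell}_n(t)) \to x(\ell(t))$: if $x$ is continuous at $\ell(t)$ the conclusion is immediate, while if $\ell$ is locally constant near $t$ the composition is locally constant on both sides, so no matching of one-sided limits is required.

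The step I expect to be the main obstacle is precisely the pointwise analysis at times $t$ where $x$ has a jump at $\ell(t)$ while $\ell$ is strictly increasing at $t$. There, $\ell_n(t)$ may approach $\ell(t)$ from either side and $x_n(\ell_n(t))$ could in principle latch onto the wrong one-sided limit; this is exactly where joint, rather than marginal, convergence of $(x_n,\ell_n)$ is decisive, and where the assumption $\ell \in \mathbb{D}_0$ (continuity of $\ell$) is used to prevent spurious jumps of $x \circ \ell$ from accumulating. The remaining work is to reassemble the convergences on $[0,T]$, taken over an exhausting sequence of continuity points $T$ of $x$, into Skorohod convergence on $[0,\infty)$ by a standard diagonal argument, after which the continuous mapping theorem concludes $X^n \circ L^n \Rightarrow X \circ L$.
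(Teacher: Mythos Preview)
The paper does not prove this result from scratch: it simply invokes Theorem~1.1(b) and Lemma~2.3(a) of Kurtz~\cite{KurtzRTC}. Your Skorohod-representation-plus-continuous-mapping route is a reasonable alternative in spirit, but the deterministic composition lemma you rely on is false at the stated generality, and the obstacle is not where you place it. Take $\ell$ constant equal to $c$ on $[a,b]$ and strictly increasing elsewhere, and let $x$ jump at $c$; then $x\circ\ell$ jumps at $a$ and is constant on $(a,b]$. With $x_n=x$ and $\ell_n\to\ell$ uniformly but $\ell_n$ crossing level $c$ at the midpoint $(a+b)/2$ (say $\ell_n$ affine from $c-\tfrac1n$ to $c+\tfrac1n$ on $[a,b]$), the composition $x_n\circ\ell_n$ jumps at $(a+b)/2$, a fixed distance from $a$, so no sequence of time changes $\lambda_n\to\mathrm{id}$ can align the jumps and $J_1$-convergence fails. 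Your sentence ``if $\ell$ is locally constant near $t$ the composition is locally constant on both sides'' is correct for the limit $x\circ\ell$ but says nothing about $x_n\circ\ell_n$, since $\ell_n$ need not inherit the flat piece. The genuine obstruction thus sits at flat pieces of $\ell$ lying at jump levels of $x$, not at points where $\ell$ is strictly increasing (those are in fact harmless, essentially by the argument you sketch).

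To salvage the approach you must replace the everywhere-continuity claim by almost-sure continuity of the composition map under the law of $(X,L)$: one needs $\mathbf{P}\bigl(X\text{ jumps at some }s\text{ with }L^{-1}(\{s\})\text{ nondegenerate}\bigr)=0$, and then apply the almost-everywhere version of the continuous mapping theorem. This holds in the paper's application (independent Feller $X$ and inverse-subordinator $L$: the flat levels of $L$ form a countable set a.s., and a Feller process has no fixed jump times), but it is an extra probabilistic ingredient that your purely deterministic lemma cannot supply. Kurtz's cited results are precisely what handle this issue.
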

\begin{proof}
The proof follows from part b) of Theorem 1.1 and part a) of Lemma 2.3 in \cite{KurtzRTC}.
\end{proof}

The sequence $X^n$ on $F$ is right-continuous with no discontinuity other than jumps and converges to a process $X$ on $F$ with generator $(A,D(A))$ associated with $(\mathcal{E}, \mathcal{F})$ on $L^2(F, m)$. Indeed, $(\mathcal{E}, \mathcal{F})$ is a regular Dirichlet form on $L^2(F, m)$, then there exists $X$ which is an Hunt process with an $m$-symmetric transition function so that $(\mathcal{E}, \mathcal{F})$ is the Dirichlet form of the transition function of $X$ (\cite[Theorem 1.5.1]{chen-book}).

\begin{theorem}\label{main2}
A sequence of forms $\{\mathcal{E}^n\}$  M-converges to a form $\mathcal{E}$ in  $L^2(F, m)$   if and only if $X^{\Phi, n} \to X^\Phi$ in distribution as $n\to \infty$ in $\mathbb{D}$.
\end{theorem}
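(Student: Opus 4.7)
The proof rests on two tools already available: Theorem \ref{main1}, which equates M-convergence with strong $L^2(F,m)$-operator convergence of the time-changed resolvents $G_\lambda^{\Phi,n}$ and semigroups $T_t^{\Phi,n}$, and Kurtz's random time-change theorem \ref{teoK}, which bridges analytic convergence to weak convergence in $\mathbb{D}$. The identity \eqref{IDusefuln}, $\lambda R_\lambda^{\Phi,n} = \Phi(\lambda) R_{\Phi(\lambda)}^n$, will be the central algebraic switch between the base and the time-changed quantities, and will be used in both directions.

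\textbf{Necessity ($\Rightarrow$).} Assume $\mathcal{E}^n \stackrel{M}{\to} \mathcal{E}$. By Theorem \ref{teo2}, the base semigroups converge $T_t^n \to T_t$ in the strong $L^2(F,m)$-operator topology uniformly on compact time intervals. Under the regular Dirichlet form / Feller setting of Section 3, this upgrades to Skorokhod convergence $X^n \to X$ in $\mathbb{D}$ via the classical Feller-semigroup convergence theorem. Since the inverse subordinator $L$ is the same for every $n$ and is independent of each base process by construction, joint convergence $(X^n, L) \to (X, L)$ in $\mathbb{D} \times \mathbb{D}_0$ follows by independence. Theorem \ref{teoK} then yields $X^{\Phi,n} = X^n \circ L \to X \circ L = X^\Phi$ in distribution in $\mathbb{D}$.

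\textbf{Sufficiency ($\Leftarrow$).} Suppose $X^{\Phi,n} \to X^\Phi$ in distribution in $\mathbb{D}$. Finite-dimensional distributions converge, so that for every $f \in C_b(F)$ and $\lambda > 0$,
\begin{align*}
R_\lambda^{\Phi,n} f(x) \;=\; \int_0^\infty e^{-\lambda t}\, \mathbf{E}_x\!\left[f(X_t^{\Phi,n})\right] dt \;\longrightarrow\; R_\lambda^\Phi f(x)
\end{align*}
pointwise in $x$, by Fubini and bounded convergence (the integrand being dominated by $\|f\|_\infty e^{-\lambda t}$). Uniform boundedness of the $\lambda$-potentials on $L^2(F,m)$, combined with dominated convergence and a density argument from $C_b(F)$ to $L^2(F,m)$, upgrades this to strong $L^2$-operator convergence $R_\lambda^{\Phi,n} \to R_\lambda^\Phi$ for every $\lambda > 0$. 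Inverting \eqref{IDusefuln} as $R_{\Phi(\lambda)}^n = \lambda R_\lambda^{\Phi,n}/\Phi(\lambda)$ then gives strong convergence $R_\mu^n \to R_\mu$ for every $\mu$ in the range of $\Phi$. With $\mathtt{k} = \mathtt{d} = 0$ and an infinite L\'evy measure, $\Phi : [0,\infty) \to [0,\infty)$ is a continuous strictly increasing bijection, so the range in question covers all $\mu > 0$, and Theorem \ref{teo1} delivers $\mathcal{E}^n \stackrel{M}{\to} \mathcal{E}$.

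\textbf{Main obstacle.} The delicate step sits in the ($\Rightarrow$) direction: one must ensure that strong $L^2$-convergence of the base Feller semigroups genuinely translates into Skorokhod convergence of the paths $X^n$, so that Kurtz's theorem becomes applicable. This is precisely why the regular Dirichlet form / Hunt-process framework of Section 3 is essential, as it produces Feller realizations for which the $L^2$-to-$\mathbb{D}$ upgrade is classical. In the ($\Leftarrow$) direction the chief technicality is instead the passage from pointwise weak convergence of $\lambda$-potentials to strong $L^2(F,m)$-operator convergence, handled by the uniform resolvent bound $\|R_\lambda^{\Phi,n}\|_{L^2 \to L^2} \le 1/\lambda$ and dominated convergence.
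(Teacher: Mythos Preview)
Your proof is correct and follows essentially the same approach as the paper: the forward direction upgrades $L^2$-semigroup convergence to Skorokhod convergence of the base processes (the paper cites Theorem 17.25 of Kallenberg for this step) and then applies Kurtz's random time-change theorem \ref{teoK}, exactly as you do. In the backward direction the paper passes from weak convergence to strong $L^2$-convergence of the time-changed \emph{semigroups} $P^{\Phi,n}_t$ and then invokes Theorem \ref{main1}, whereas you route through the time-changed \emph{resolvents}, invert \eqref{IDusefuln}, and invoke Theorem \ref{teo1} directly; the two routes are equivalent, and yours is slightly more self-contained since it makes explicit the surjectivity of $\Phi$ needed to recover all base resolvents.
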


\begin{proof}
From the $M$-convergence of the forms we have that $P^n_t f \to P_t f$ strongly in $L^2(E, m)$, see Theorem \ref{teo2}. We use Theorem 17.25 (Trotter, Sova, Kurtz, Mackevi\v{c}ius) in \cite{Kal} by means of which we have that strong convergence of semigroups (Feller semigroups) is equivalent to weak convergence of measures if $X^n_0 \to X_0$ in distribution in $E$. So, we obtain that $X^n \stackrel{d}{\to} X$ in $\mathbb{D}$. By Theorem \ref{teoK} and the convergence in distribution of $(X^n, L)$, we conclude that $X^{\Phi, n} \stackrel{d}{\to} X^{\Phi}$ in $\mathbb{D}$.

If $X^{\Phi, n} \stackrel{d}{\to} X^{\Phi}$, then we have that, $\forall\, x \in E$, $\forall\, t>0$
\begin{align*}
\mathbf{E}_x[f(X^{\Phi, n}_t)] \to \mathbf{E}_x[f(X^{\Phi}_t)] \quad \forall\, f \in C_b
\end{align*}
and therefore, $\forall\, f \in D(A)$. Thus, $P^{\Phi, n}_t \to P^\Phi_t$ in the strong operator topology of  $L^2(E, m)$ uniformly on every interval $0<t\leq t_1$. So, by Theorem \ref{main1} we obtain the convergence of $\mathcal{E}^n$. 
\end{proof}

\begin{remark}
We observe that our result can be extended to fractional operators characterized by a sequence $\Phi_n$ converging to $\Phi$. Indeed, Theorem \ref{teoK} holds  by considering a sequence $L^n$ associated with $\Phi_n$ (in this direction some results have been proved in \cite{ChenSong} by considering only subordination of symmetric Markov processes). We note that we can consider varying Hilbert spaces by using generalized Mosco convergence (\cite{Kol, KuShi}).
\end{remark}

\begin{remark}
Let us consider the process $Z_t$, $t\geq 0$, governed by the fractional problem
\begin{align*}
\mathfrak{D}^\Phi_t u = - \Psi(-A) u, \quad u_0 \in D(-\Psi(-A)) \subset D(A)
\end{align*}
where $\Psi$ has a representation \eqref{LevKinFormula} with $\mathtt{k}=\mathtt{d}=0$ for some L\'evy measure. 

Let $L$ be the inverse of $H$ with symbol $\Phi$ as in the previous sections. Let $L^*$ be the inverse to $H^*$ with symbol $\Psi$. Then $Z$ can be represented through subordination (by $H^*$) and time-change (by $L^*$) of $X$ with generator $A$. We note that
\begin{align*}
\mathbf{E}_x \left[ \int_0^\infty e^{-\lambda t} f(Z_t) dt \right] := R^{\Psi(\Phi)}_\lambda f(x)
\end{align*}
can be written as
\begin{align*}
R^{\Psi(\Phi)}_\lambda f(x) =\frac{\Phi(\lambda)}{\lambda} \left( \frac{\Psi(\Phi(\lambda))}{\Phi(\lambda)} \right)^2 R_{\Psi(\Phi(\lambda))} f(x)
\end{align*}
which generalizes \eqref{IDuseful}.
\end{remark}

\section{Examples and applications}

We point out that Theorems \ref{main1} and  \ref{main2}   allows us to obtain  asymptotic  results for fractional equations via  M-convergences of the corresponding energy forms or via $\Gamma$-convergence if the forms are asymptotically compact (see Section 2).

Now we focus our attention on some asymptotic results obtained previously by the authors just to give to the readers few simple examples.

\subsection{Asymptotic for skew diffusions on regular domains}
We consider the results in \cite{CapDovAsy} which can be associated with the sequence $X^n$ and obtain results as in the previous sections for $X^{\Phi, n}$ driven by the time fractional equation \eqref{time-frac-problem}. 

Let $\Omega_q$ with radius $q=l,\ell,r$ where $r=\ell+\epsilon$ and $\epsilon>0$, be the balls centred at the same point and such that $\Omega_l \subset \Omega_\ell \subset \Omega_r$.  Let $B^1, B^2$ be two independent Brownian motions and define the process (see the generator \eqref{A-form} below)
\begin{align*}
B^{(\alpha, \eta)}_t := \left\lbrace
\begin{array}{ll}
\displaystyle B^1_t & \textrm{on } \Omega_\ell \setminus \overline{\Omega_l} \\
\displaystyle B^2_{\eta t} & \textrm{on } \Sigma_\epsilon = \Omega_r \setminus \overline{\Omega_\ell}
\end{array}
\right. 
\end{align*}
with skew condition on $\partial \Omega_\ell$:
\begin{align*}
\forall\, x \in \partial \Omega_\ell, \qquad \mathbb{P}_x(B^{(\alpha, \eta)}_t \in \Omega_\ell \setminus \overline{\Omega_l}) =1-\alpha \quad \textrm{and} \quad \mathbb{P}_x(B^{(\alpha, \eta)}_t \in \Sigma_\epsilon) = \alpha.
\end{align*}
Moreover, we require that $B^{(\alpha, \eta)}$ is killed on $\partial \Omega_l$ and $\partial \Omega_r$. Since we have different variances depending on $\eta >0$, we refer to the process $B^{(\alpha, \eta)}$ as a modified process. Obviously, $\alpha \in (0,1)$ is the skewness parameter and $B^{(\alpha, \eta)}$ is called (modified) skew process. We write $\alpha=\alpha_n$, $\eta=\eta_n$, $\epsilon=\epsilon_n$ by underling the dependence from $n$ ($\alpha_n\to 0$, $\eta_n\to 0$ and $\epsilon_n \to 0$ as $n \to \infty$) and we consider the collapsing domain $\Sigma_\epsilon$ (that is, with vanishing thickness $\epsilon$). Our aim is to study  a killed diffusion on $\Omega_r \setminus \overline{\Omega_l}$ with skew condition on $\partial \Omega_\ell$ and different behaviour in $\Sigma_\epsilon=\Omega_r \setminus \overline{\Omega_\ell}$ and $\Omega_\ell \setminus \overline{\Omega_l}$  under the assumption \begin{align}
\lim_{n \to \infty} \frac{\alpha \epsilon}{\eta} =0.
\label{main-condition}
\end{align}
The classical case $\alpha=\eta$ has been extensively investigated in literature (see for example \cite{AB,BCF,ButtazzoDMasoMosco} and the references therein): in this case,  the condition \eqref{main-condition} becomes trivial. Our new result is concerned with the asymptotic analysis obtained under \eqref{main-condition} with $\alpha\neq\eta$. In particular, we have obtained in \cite{CapDovAsy} the following result in $\mathbb{R}^2$ for the elliptic problem \eqref{elliptic-asy} below.
\begin{theorem}
\label{main-thm}
Let $Y_t$ be a reflecting Brownian motion on $\overline{\Omega_\ell} \setminus \overline{\Omega_l}$ with boundary local time
\begin{align*}
\gamma^{\partial \Omega_\ell}_t (Y) := \int_0^t \mathbf{1}_{\partial \Omega_\ell} (Y_s)ds.
\end{align*}
Under \eqref{main-condition}, we have that
\begin{align}
\label{eq-main-thm}
\lim_{n \to \infty}\mathbb{E}_x \left[ \int_0^\infty f(B^{(\alpha, \eta)}_t) M^\epsilon_t dt \right] = \mathbb{E}_x\left[ \int_0^{\tau_l}  f(Y_t) \exp \left( - \left( \lim_{n \to \infty} \frac{\alpha}{(1-\alpha) \epsilon} \right) \gamma^{\partial \Omega_\ell}_t(Y) \right) dt \right]
\end{align}
where $M^\epsilon_t := \mathbf{1}_{(t < \tau_{\epsilon})}$ with $\tau_{\epsilon} := \inf \{s > 0\,:\, B^{(\alpha, \eta)}_s \notin \Omega_r \setminus \overline{\Omega_l}\}$ and $\tau_{l} = \inf \{s > 0\,:\, Y_s \in \partial \Omega_l\}$.
\end{theorem}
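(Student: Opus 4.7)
\medskip

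\noindent\textbf{Proof plan.} The plan is to recognize both sides of \eqref{eq-main-thm} as $0$-potentials of Hunt processes associated with regular Dirichlet forms, and then to reduce the asymptotic statement to the Mosco convergence of those forms via Theorem \ref{teo1} (applied at $\lambda=0$, which is legitimate here since the base process has finite mean lifetime due to the Dirichlet killing on $\partial\Omega_l\cup\partial\Omega_r$ on the left-hand side and on $\partial\Omega_l$ on the right-hand side, with the Feynman--Kac exponential providing extra decay). First I would write the Dirichlet form of the modified skew process $B^{(\alpha_n,\eta_n)}$ on $L^2(\Omega_r\setminus\overline{\Omega_l},m_n)$ with weighted reference measure $m_n=(1-\alpha_n)\mathbf 1_{\Omega_\ell\setminus\overline{\Omega_l}}dx+\alpha_n\mathbf 1_{\Sigma_{\epsilon_n}}dx$ as
\begin{equation*}
\mathcal{E}^n(u,u)=\tfrac{1-\alpha_n}{2}\int_{\Omega_\ell\setminus\overline{\Omega_l}}|\nabla u|^2\,dx+\tfrac{\alpha_n\eta_n}{2}\int_{\Sigma_{\epsilon_n}}|\nabla u|^2\,dx,
\end{equation*}
on $H^1_0(\Omega_r\setminus\overline{\Omega_l})$, and identify the candidate limit form
\begin{equation*}
\mathcal{E}(u,u)=\tfrac12\int_{\Omega_\ell\setminus\overline{\Omega_l}}|\nabla u|^2\,dx+c\int_{\partial\Omega_\ell}u^2\,d\sigma,\qquad c:=\lim_{n\to\infty}\tfrac{\alpha_n}{(1-\alpha_n)\epsilon_n},
\end{equation*}
with domain $\{u\in H^1(\Omega_\ell\setminus\overline{\Omega_l})\colon u=0\text{ on }\partial\Omega_l\}$, extended by zero on $\Sigma_{\epsilon_n}$ so that all forms sit in the ambient space $L^2(F,m)$ with $F=\Omega_r\setminus\overline{\Omega_l}$.

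Next I would prove that $\mathcal{E}^n\xrightarrow{M}\mathcal{E}$ in the sense of Definition \ref{def1}. For the $\liminf$ condition (a), given $v_n\rightharpoonup u$ in $L^2(F,m)$, the bulk term $\tfrac{1-\alpha_n}{2}\int_{\Omega_\ell\setminus\overline{\Omega_l}}|\nabla v_n|^2$ yields $\tfrac12\int|\nabla u|^2$ by weak lower semicontinuity. For the thin layer, the key ingredient is the trace-Poincaré inequality on $\Sigma_{\epsilon_n}$: since $v_n=0$ on $\partial\Omega_r$, one has
\begin{equation*}
\int_{\Sigma_{\epsilon_n}}|\nabla v_n|^2\,dx\ \geq\ \frac{1}{\epsilon_n}\int_{\partial\Omega_\ell}v_n^2\,d\sigma+o(1),
\end{equation*}
so the layer contribution is bounded below by $\tfrac{\alpha_n\eta_n}{2\epsilon_n}\int_{\partial\Omega_\ell}v_n^2\,d\sigma$; the scaling hypothesis \eqref{main-condition}, combined with the definition of $c$, lets one recover the coefficient $c$ on the boundary term in $\mathcal{E}$ after passing to the trace $u\big|_{\partial\Omega_\ell}$. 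For the recovery sequence (b), given $u\in D(\mathcal{E})$ I would take $u_n$ equal to $u$ on $\Omega_\ell\setminus\overline{\Omega_l}$ and, on $\Sigma_{\epsilon_n}$, the linear interpolation in the normal direction between $u|_{\partial\Omega_\ell}$ and $0$; a direct computation gives $\tfrac{\alpha_n\eta_n}{2}\int_{\Sigma_{\epsilon_n}}|\nabla u_n|^2\to c\int_{\partial\Omega_\ell}u^2\,d\sigma$ under \eqref{main-condition}.

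Having established Mosco convergence, Theorem \ref{teo1} delivers $G^n_\lambda f\to G_\lambda f$ in $L^2(F,m)$ for every $\lambda>0$, and a monotone-convergence argument using the uniform $L^\infty$ bound $\|G^n_\lambda f\|_\infty\leq \lambda^{-1}\|f\|_\infty$ and the finiteness of the exit times extends this to $\lambda=0$, giving
\begin{equation*}
\mathbb{E}_x\Big[\int_0^{\tau_{\epsilon_n}}f(B^{(\alpha_n,\eta_n)}_t)\,dt\Big]=G^n_0 f(x)\ \longrightarrow\ G_0 f(x).
\end{equation*}
The final step is to identify $G_0 f$ with the right-hand side of \eqref{eq-main-thm}. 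The Hunt process associated with $\mathcal{E}$ is precisely the reflecting Brownian motion $Y$ on $\overline{\Omega_\ell}\setminus\overline{\Omega_l}$ absorbed at $\partial\Omega_l$, perturbed by the killing Feynman--Kac functional $\exp(-c\gamma^{\partial\Omega_\ell}_t(Y))$: this is the Revuz correspondence between the boundary form $c\int_{\partial\Omega_\ell} u^2\,d\sigma$ and the PCAF $c\,\gamma^{\partial\Omega_\ell}_t$ whose Revuz measure is $c$ times the surface measure on $\partial\Omega_\ell$ (cf.\ the correspondence between symmetric forms and self-adjoint generators recalled in Section 3).

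The main obstacle will be Step 2, the verification of Mosco convergence on the collapsing domain with skew weights: three small parameters $\alpha_n,\eta_n,\epsilon_n$ interact and must be balanced to produce a finite limit coefficient for the boundary term, and the underlying Hilbert space weights $m_n$ depend on $n$, so one must either work in a fixed ambient $L^2(F,m)$ as above (paying the price of controlling the contribution of the layer in the weak limit) or appeal to the generalized Mosco convergence on varying spaces mentioned in the remark after Theorem \ref{main2}. The sharp trace-Poincaré estimate on $\Sigma_{\epsilon_n}$, together with the precise use of \eqref{main-condition} to kill the cross-terms arising from the $\eta_n$-factor, is the crucial technical point.
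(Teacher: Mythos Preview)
The paper does not prove this theorem at all: it is quoted from the earlier work \cite{CapDovAsy}, and the present paper only recalls the statement and the associated sequence of energy forms \eqref{A(n)2}. So there is no ``paper's own proof'' to compare with; the relevant comparison is with the form \eqref{A(n)2} and the reference measure $dm=\rho^{-1}dx$ that the paper records.

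Your overall strategy---identify the Dirichlet forms, prove Mosco convergence, deduce resolvent convergence, and read off the Feynman--Kac representation of the limit via the Revuz correspondence---is exactly the route indicated by the paper (and carried out in \cite{CapDovAsy}). However, your specific form and measure are wrong, and the error is not harmless. From \eqref{A-d-dim} together with $\rho a=\sigma^2$ one finds $dm_n=(1-\alpha_n)\mathbf 1_{\Omega_\ell\setminus\overline{\Omega_l}}\,dx+(\alpha_n/\eta_n)\mathbf 1_{\Sigma_{\epsilon_n}}\,dx$ and
\[
\mathcal E^n(u,u)=(1-\alpha_n)\int_{\Omega_\ell\setminus\overline{\Omega_l}}|\nabla u|^2\,dx+\alpha_n\int_{\Sigma_{\epsilon_n}}|\nabla u|^2\,dx,
\]
as in \eqref{A(n)2}; the natural transmission condition for this form is precisely \eqref{BC2}. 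Your form carries $\alpha_n\eta_n$ on the layer and your measure carries $\alpha_n$ there; this produces the transmission condition $(1-\alpha_n)\partial_\nu u|_{-}=\alpha_n\eta_n\,\partial_\nu u|_{+}$, which is not \eqref{BC2}.

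This discrepancy wrecks your $\liminf$ step. With your coefficient, the trace--Poincar\'e bound on $\Sigma_{\epsilon_n}$ yields a lower contribution $\tfrac{\alpha_n\eta_n}{2\epsilon_n}\int_{\partial\Omega_\ell}v_n^2\,d\sigma$, and since $\alpha_n/\epsilon_n\to c$ while $\eta_n\to 0$, this tends to $0$, not to $c\int_{\partial\Omega_\ell}u^2\,d\sigma$. With the correct coefficient $\alpha_n$ the same inequality gives $\tfrac{\alpha_n}{\epsilon_n}\int_{\partial\Omega_\ell}v_n^2\,d\sigma$, which does converge to the desired boundary term. Relatedly, the role of \eqref{main-condition} is not what you describe: it guarantees that the layer mass $m_n(\Sigma_{\epsilon_n})\sim \alpha_n\epsilon_n/\eta_n\to 0$, so that the varying $L^2(dm_n)$ spaces collapse onto $L^2(\Omega_\ell\setminus\overline{\Omega_l},dx)$; it is $\alpha_n/\epsilon_n\to c$ (not \eqref{main-condition}) that fixes the Robin coefficient. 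Once you correct the form and the measure, your outline matches the argument the paper attributes to \cite{CapDovAsy}.
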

We are interested in the asymptotic analysis (as $n \to \infty$) of the solution 
\begin{align}
\label{elliptic-asy}
u \in D(A^n) \quad s.t.\ A^n u = -f
\end{align}
on the collapsing domain $\Omega_r$ under condition \eqref{main-condition}. We have the transmission condition
\begin{eqnarray}
u|_{\partial \Omega^{-}_\ell} &=& u|_{\partial \Omega^{+}_\ell} \qquad \textrm{continuity on the boundary}, \label{BC1}\\
(1-\alpha) \partial_{ \nu} u |_{\partial \Omega^{-}_\ell} &=& \alpha \partial_{\nu} u |_{\partial \Omega^{+}_\ell} \qquad \textrm{partial reflection}, \label{BC2}
\end{eqnarray}
where $\partial_{\nu}u$ is the normal derivative of $u$ and
\begin{align}
\label{dom-A-form}
D(A^n) =\{g, A^ng \in C_b\,:\, g|_{\partial \Omega_l} = g|_{\partial \Omega_r}=0,\, g\textrm{ satisfies } \eqref{BC1},\, \eqref{BC2}\}.
\end{align} 
The infinitesimal generator
\begin{equation}
\label{A-form}
A^nu= \left\lbrace
\begin{array}{ll}
\displaystyle \frac{1}{2}\Delta u & \textrm{on } \Omega_\ell \setminus \overline{\Omega_l}\\
\displaystyle \frac{\eta}{2}\Delta u & \textrm{on } \Omega_r \setminus \overline{\Omega_\ell}
\end{array}
\right.
\end{equation}
can be written as follows
\begin{equation}
\label{A-d-dim}
 -\frac{\sigma^2(x)}{2 a(x)}\nabla \left(a(x) \nabla u \right)=f \quad \textrm{in} \quad {\Omega_{\ell+\epsilon} \setminus \overline{\Omega_l}} 
\end{equation}
where
\begin{equation}
a(x) = (1-\alpha) \mathbf{1}_{\overline{\Omega_\ell} \setminus \overline{\Omega_l}}(x) + \alpha \mathbf{1}_{\Omega_{\ell+\epsilon} \setminus \overline{\Omega_\ell}}(x), \quad \alpha \in (0,1)
\end{equation}
and 
\begin{equation}
\rho(x)a(x)=\sigma^2(x) = \mathbf{1}_{\overline{\Omega_\ell} \setminus \overline{\Omega_l}}(x) + \eta \mathbf{1}_{\Omega_{\ell+\epsilon} \setminus \overline{\Omega_\ell}}(x), \quad \eta>0.
\end{equation}
We consider the measure $dm=1/\rho(x) dx$ (where we denoted by $dx$ the Lebesgue measure on $\mathbb{R}^d$) under the assumption that \eqref{main-condition} holds true. Let $F$ be an open regular domain  such that $F \supset \overline{\Omega_{r}}.$ We have studied in \cite{CapDovAsy} the Mosco convergence of the sequence of energy forms in $L^2(F)$
\begin{equation}
\mathcal{E}^n(u,u)=
\begin{cases}
\int_{\Omega_\ell \setminus \overline{\Omega_l}} (1-\alpha) |\nabla u|^2 dx+\alpha \int_{\Omega_{\ell+\epsilon} \setminus \overline{\Omega_\ell}} |\nabla u|^2 dx &\text{if} \, u|_{\Omega_r \setminus \overline\Omega_l}\in H^1_0(\Omega_r \setminus \overline{\Omega_l})\\
+ \infty &\text{otherwise in }  L^2(F)
\end{cases} \label{A(n)2}
\end{equation}
according with $\alpha/\epsilon \to c \in [0, \infty]$ as $n \to \infty$ (see \cite{CapDovAsy} for details, Theorem 6.1, Theorem 6.2, Theorem 6.3).

In the setting of the previous sections we have the sequence $X^n = B^{(\alpha, \eta)}$ on $E^n=\Omega_r \setminus \overline{\Omega_l}$ is a modified skew Brownian motion with the boundary conditions prescribed above. 

\begin{theorem}
Let the setting of Theorem \ref{main-thm} prevails. The solution to \eqref{time-frac-problem} with $(A^n, D(A^n))$ given in \eqref{A-d-dim} and \eqref{dom-A-form} is 
\begin{align*}
P^{\Phi, n}_t f(x) = \mathbf{E}_x[f(B^{(\alpha, \eta)}_{L_t}) M^\epsilon_{L_t}]
\end{align*}
and $P^{\Phi, n}_t \to P^\Phi_t$ strongly in $L^2(\Omega_\ell \setminus \overline{\Omega_l}, dx)$ as $n\to \infty$ where
\begin{align*}
P^\Phi_t f(x) = \mathbf{E}_x[f(X^\Phi_t)] = \mathbf{E}_x [f(Y^\Phi_t) \mathbf{1}_{(L_t < \zeta)} ]
\end{align*}
is the solution to \eqref{time-frac-problem} with $(A, D(A))$ where $A$ is the Dirichlet Laplacian if $\alpha/\epsilon \to \infty$, the Neumann Laplacian if $\alpha/\epsilon\to 0$, the Robin Laplacian if $\alpha/\epsilon \to c \in (0, \infty)$. Moreover, under \eqref{main-condition}, we have that
\begin{align}
\label{eq-main-thm-L}
\lim_{n \to \infty}\mathbb{E}_x \left[ \int_0^\infty f(B^{(\alpha, \eta)}_{L_t}) M^\epsilon_{L_t} dt \right] = \mathbb{E}_x\left[ \int_0^{\infty}  f(Y_t^\Phi) \exp \left( - \left( \lim_{n \to \infty} \frac{\alpha}{(1-\alpha) \epsilon} \right) \gamma^{\partial \Omega_\ell}_{L_t}(Y) \right) \mathbf{1}_{(L_t < \tau_l)} dt \right].
\end{align}
\end{theorem}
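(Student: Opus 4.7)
The plan is to combine Theorem \ref{time-frac-THM}, Theorem \ref{thm:lpot-L} and Theorem \ref{main1} with the Mosco convergence of $\{\mathcal{E}^n\}$ already established in \cite{CapDovAsy}. First I would identify the candidate solution. The operator $(A^n, D(A^n))$ is the generator of the modified skew Brownian motion $X^n = B^{(\alpha,\eta)}$ on $E^n = \Omega_r \setminus \overline{\Omega_l}$ killed at $\partial\Omega_l \cup \partial\Omega_r$, with lifetime $\tau_\epsilon$, and $\mathcal{E}^n$ in \eqref{A(n)2} is its regular Dirichlet form. Applying Theorem \ref{time-frac-THM} to $(A^n, D(A^n))$ (with $f$ extended by zero to the cemetery) produces the unique strong $L^2$ solution
\begin{equation*}
P^{\Phi, n}_t f(x) = \mathbf{E}_x[f(X^{\Phi, n}_t)] = \mathbf{E}_x\bigl[f(B^{(\alpha,\eta)}_{L_t})\, M^\epsilon_{L_t}\bigr],
\end{equation*}
the second equality being \eqref{semigPhi-L-continuity} combined with $\{t < \zeta^{\Phi, n}\} = \{L_t < \tau_\epsilon\}$.

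Second I would invoke \cite{CapDovAsy} (Theorems 6.1, 6.2 and 6.3 therein): according as $\alpha/\epsilon \to \infty$, $0$, or $c \in (0,\infty)$, the sequence $\{\mathcal{E}^n\}$ M-converges in $L^2(F)$ to the energy form on $\Omega_\ell \setminus \overline{\Omega_l}$ associated respectively with the Dirichlet, Neumann, or Robin Laplacian. Theorem \ref{main1} then yields $G^{\Phi,n}_\lambda \to G^\Phi_\lambda$ and $T^{\Phi,n}_t \to T^\Phi_t$ strongly in $L^2(F, dx)$, uniformly on every compact interval $0 < t \leq t_1$ for the semigroups. Since the limit form is supported on $\Omega_\ell \setminus \overline{\Omega_l}$, this localizes to the strong $L^2(\Omega_\ell \setminus \overline{\Omega_l}, dx)$ convergence asserted in the theorem, and the limit $P^\Phi_t$ is the semigroup of $X^\Phi = Y \circ L$ up to its lifetime, where $Y$ is the corresponding reflected or killed Brownian motion; in the Robin regime the partial reflection at $\partial\Omega_\ell$ is encoded by the Feynman--Kac weight $\exp(-c\, \gamma^{\partial\Omega_\ell}_{L_t}(Y))$.

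Third, I would derive \eqref{eq-main-thm-L}. Identity \eqref{IDusefuln} gives, for $\lambda>0$, $R^{\Phi,n}_\lambda f = (\Phi(\lambda)/\lambda) R^n_{\Phi(\lambda)} f$, and convergence of resolvents for the base processes (Theorem \ref{teo1}, applied to the M-convergence of $\mathcal{E}^n$) yields $R^{\Phi,n}_\lambda f \to R^\Phi_\lambda f$ in $L^2$. Theorem \ref{thm:lpot-L} applied to the limit Feynman--Kac semigroup of $Y$ rewrites this limit as
\begin{equation*}
R^\Phi_\lambda f(x) = \mathbf{E}_x\!\left[\int_0^\infty e^{-\lambda t} f(Y^\Phi_t) \exp\bigl(-c\, \gamma^{\partial\Omega_\ell}_{L_t}(Y)\bigr) \mathbf{1}_{(L_t < \tau_l)}\, dt\right]
\end{equation*}
with $c = \lim_n \alpha/((1-\alpha)\epsilon)$. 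Passing $\lambda \downarrow 0$ by monotone convergence, which is legitimate because $\Omega_r \setminus \overline{\Omega_l}$ is bounded and hence $\mathbf{E}_x[\tau_\epsilon], \mathbf{E}_x[\tau_l] < \infty$, then delivers \eqref{eq-main-thm-L}.

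The hard part is the second step: the Mosco machinery produces only an abstract $L^2$ limit, and one must match it with the probabilistic object $\mathbf{E}_x[f(Y^\Phi_t) \mathbf{1}_{(L_t<\zeta)}]$ together with the correct Feynman--Kac weight in the Robin case. This identification of the limit Hunt process (killed, reflecting, or partially reflecting at $\partial\Omega_\ell$) from the limit Dirichlet form is the non-trivial analytic input supplied by \cite{CapDovAsy}; everything else is a routine transfer through the Laplace-level identity \eqref{IDusefuln} and Theorem \ref{thm:lpot-L}.
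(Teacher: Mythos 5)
Your proposal follows essentially the same route as the paper: the paper's proof is exactly the three steps you outline --- Theorem \ref{time-frac-THM} for the representation of $P^{\Phi,n}_t f$ and $P^{\Phi}_t f$, the M-convergence of the forms from \cite{CapDovAsy} fed into the convergence results of Section 6 (the paper cites Theorem \ref{main2}, you cite the equivalent Theorem \ref{main1}), and then \eqref{eq-main-thm-L} ``by the convergence of the solutions'', with the $\lambda$-potential identification via Theorem \ref{thm:lpot-L} carried out in the displays immediately following the theorem. Your write-up is if anything more explicit than the paper's three-sentence proof; the one place where both arguments are equally terse is the interchange of the limits $n\to\infty$ and $\lambda\downarrow 0$ needed to pass from resolvent convergence at fixed $\lambda>0$ to the $0$-potential statement \eqref{eq-main-thm-L}.
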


\begin{proof}
From Theorem \ref{time-frac-THM} we obtain the solution to \eqref{time-frac-problem} with $A^n$ and $A$. From Theorem \ref{main2} we obtain the asymptotic results. Formula \eqref{eq-main-thm-L} can be obtained by the convergence of the solutions.
\end{proof}
We have that (see also \eqref{semigPhi-L-continuity} and \eqref{semigPhi-1}),
\begin{align*}
\lim_{\lambda \downarrow 0} R^\Phi_\lambda \mathbf{1}(x) = \int_0^\infty \mathbf{P}_x(L_t < \zeta) dt = \int_0^\infty \mathbf{E}_x \left[ (\gamma^{\partial \Omega_\ell}_{L_t} < V), (L_t < \tau_l) \right] dt
\end{align*} 
where $V$ is an exponential random variable independent from $X^n$ and $L$ with $\mathbf{P}(V>v) = e^{-c v}$ and $c = \lim_{\epsilon \to 0} \alpha/\epsilon$. Then, we obtain
\begin{align*}
\displaystyle \lim_{\lambda \downarrow 0} R^\Phi_\lambda \mathbf{1}(x) = & \mathbf{E}_x \left[ \int_0^\infty e^{- \big(\lim_{\epsilon \to 0} \frac{\alpha}{\epsilon}\big)\gamma^{\partial \Omega_\ell}_{L_t}} \mathbf{1}_{(L_t < \tau_l)} dt \right]\\ 
\displaystyle = & \left( \lim_{\lambda \to 0} \frac{\Phi(\lambda)}{\lambda} \right) \mathbf{E}_x \left[ \int_0^{\infty} e^{- \big(\lim_{\epsilon \to 0} \frac{\alpha}{\epsilon}\big)\gamma^{\partial \Omega_\ell}_{L_t}} \mathbf{1}_{(L_t < \tau_l)} dt \right].
\end{align*}
We use the convention $\infty \cdot \gamma_t^{\partial \Omega_\ell} = 0$ for $t< \tau_\ell : = \inf\{s>0\,:\, Y_s \in \partial \Omega_\ell\}$. Then, in the case of Dirichlet condition ($\alpha/\epsilon \to \infty$)
\begin{align}
\mathbf{E}_x \left[ \int_0^{\infty} e^{- \big(\lim_{\epsilon \to 0} \frac{\alpha}{\epsilon}\big)\gamma^{\partial \Omega_\ell}_{L_t}} \mathbf{1}_{(L_t < \tau_l)} dt \right] = & \mathbf{E}_x\left[ \int_0^\infty \mathbf{1}_{(L_t < \tau_l \wedge \tau_\ell)} dt \right] \notag\\
= & \lim_{\lambda \downarrow 0} \frac{1}{\lambda} \mathbf{E}_x \left[ 1 - e^{-\Phi(\lambda) (\tau_l \wedge \tau_\ell)} \right] \label{Dir-1}\\
= & \Phi^\prime(0) \mathbf{E}_x [(\tau_l\wedge \tau_\ell)]. \notag
\end{align}
In the case of Neumann condition ($\alpha /\epsilon \to 0$)
\begin{align}
\mathbf{E}_x \left[ \int_0^{\infty} e^{- \big(\lim_{\epsilon \to 0} \frac{\alpha}{\epsilon}\big)\gamma^{\partial \Omega_\ell}_{L_t}} \mathbf{1}_{(L_t < \tau_l)} dt \right] = & \mathbf{E}_x\left[ \int_0^\infty \mathbf{1}_{(L_t < \tau_l)} dt \right] \notag \\
= & \lim_{\lambda \downarrow 0} \frac{1}{\lambda} \mathbf{E}_x \left[ 1 - e^{-\Phi(\lambda) \tau_l} \right] \label{Neu-1}\\
= & \Phi^\prime(0) \mathbf{E}_x [\tau_l]. \notag
\end{align}
Finally, Robin condition follows by considering $\alpha/\epsilon \to c \in (0,\infty)$: in particular, 
\begin{align}
\mathbf{E}_x \left[ \int_0^{\infty} e^{- c\gamma^{\partial \Omega_\ell}_{L_t}} \mathbf{1}_{(L_t < \tau_l)} dt \right] = & \mathbf{E}_x\left[ \int_0^\infty \mathbf{1}_{(L_t < \tau_l)} dt \right] \notag \\
= & \lim_{\lambda \downarrow 0} \frac{\Phi(\lambda)}{\lambda} \mathbf{E}_x \left[ \int_0^{\tau_l} e^{- s \Phi(\lambda)- c \gamma_s^{\partial \Omega_\ell}} ds \right] \label{Rob-1}\\
= & \Phi^\prime(0) \mathbf{E}_x \left[ \int_0^{\tau_l} e^{- c \gamma_s^{\partial \Omega_\ell}} ds \right]. \notag
\end{align}
Formulas \eqref{Dir-1}, \eqref{Neu-1} and \eqref{Rob-1} can be obtained from Theorem \ref{thm:lpot-L}.

\subsection{Further examples}
The previous example is given for a regular domain. A similar example can be given  for domains with fractal boundaries as  in \cite{CapDovPota} where   the authors have obtained asymptotic results for skew Brownian diffusions across Koch interfaces by using $M-$convergence results proved in \cite{C-JMAA}, \cite{CV-JDE}, \cite{CV-asy}.

We recall that  $M-$convergence results have been obtained on fractal structures in order to study several  boundary value problems
 (\cite{AD}, \cite{CLV-DIE}, \cite{Lancia-DCDSS}, \cite{LVER}, \cite{LV}),
reinforcement problems for variational inequalities (\cite{CV-CVPDE}),
dynamical quasi-filling fractal layers, layered fractal fibers and potentials (\cite{CV-SIAM}, \cite{MV1}, \cite{MV2}).

Moreover, we point out that $M-$convergence results have been obtained also for non-local Dirichlet forms (see, for example, \cite{BBCK} and \cite{CKK}) and then we can apply the theory developed in the present paper also in this framework (for example in the asymptotic study of jump-processes).

Finally, we remark that  Theorems \ref{main1} and  \ref{main2}   allows us to obtain  asymptotic  results for fractional equations  also via $\Gamma$-convergence if the corresponding  forms are asymptotically compact (see, for some examples and applications,  \cite{BRA} and \cite{DALMASO}).

\small

\end{document}